\documentclass[VANCOUVER,STIX1COL]{WileyNJD-v2}
\usepackage{moreverb}

\newcommand\BibTeX{{\rmfamily B\kern-.05em \textsc{i\kern-.025em b}\kern-.08em
		T\kern-.1667em\lower.7ex\hbox{E}\kern-.125emX}}

\articletype{Article Type}%

\received{<day> <Month>, <year>}
\revised{<day> <Month>, <year>}
\accepted{<day> <Month>, <year>}

\usepackage{bm}
\usepackage{amsthm}
\usepackage{mathtools}
\usepackage{amsmath}

\usepackage{amsfonts}

\usepackage{mathtools}

\usepackage{graphicx}
\usepackage{epstopdf}
\usepackage{afterpage}
\usepackage{url}
\usepackage{caption}

\usepackage{lscape}
\usepackage{longtable}

\newcommand{\bfS}{\mathbf{S}}

\newcommand{\bfI}{\mathbf{I}}

\newcommand{\bfH}{\mathbf{H}}
\newcommand{\bfHh}{\widehat{\mathbf{{H}}}}
\newcommand{\bfR}{\mathbf{R}}

\newcommand{\bfRh}{\widehat{\mathbf{{R}}}}
\newcommand{\Sh}{\widehat{\bfS}}
\newcommand{\bfC}{\mathbf{C}}
\newcommand{\bfD}{\mathbf{D}}
\newcommand{\bfB}{\mathbf{B}}
\newcommand{\bfF}{\mathbf{F}}
\newcommand{\bfM}{\mathbf{M}}
\newcommand{\bfx}{\mathbf{x}}
\newcommand{\bfz}{\mathbf{z}}
\newcommand{\bfy}{\mathbf{y}}
\newcommand{\bfA}{\mathbf{A}}

\newcommand{\bfd}{\mathbf{d}}

\newtheorem{mydef}{Definition}
\newtheorem{cor}{Corollary}
\newtheorem*{mainass}{Key Assumption}

\begin{document}

	\date{}

	\title{{\color{black}New bounds on the condition number of the Hessian of the} preconditioned variational data assimilation problem}
	\author[1,2,3]{Jemima M. Tabeart*}
	\author[1]{Sarah L. Dance}
	\author[1,2]{Amos S. Lawless}
	\author[1,2]{Nancy K. Nichols}
	\author[1,4]{Joanne A. Waller}

	\authormark{Jemima M. Tabeart \textsc{et al}}

	\address[1]{\orgdiv{School Of Mathematical, Physical and Computational Sciences}, \orgname{University of Reading}, \orgaddress \country{UK}}
	
	\address[2]{ \orgname{National Centre for Earth Observation}, \orgaddress{\state{Reading}, \country{UK}}}
	
	\address[3]{Now at \orgdiv{School of Mathematics}, \orgname{University of Edinburgh}, \orgaddress{\country{UK}}}
	
	\address[4]{Now at \orgname{MetOffice@Reading},  \country{UK}}

	\corres{*Jemima M. Tabeart, School of Mathematics, University of Edinburgh, James Clerk Maxwell Building, The King's Buildings, Peter Guthrie Tait Road, Edinburgh, EH9 3FD, United Kingdom, \email{jemima.tabeart@ed.ac.uk}}

	\graphicspath{{/home/jemima/Dropbox/UoR/PhD/Thesis/Figures/}}
	
	\abstract[Abstract]{Data assimilation algorithms combine prior and observational information, weighted by their respective uncertainties, to obtain the most likely posterior of a dynamical system. In variational data assimilation
		the posterior is computed by solving a nonlinear least squares problem.
		Many numerical weather prediction (NWP) centres use full observation error covariance (OEC) weighting matrices, which can slow convergence of the data assimilation procedure.
		Previous work revealed the importance of the minimum eigenvalue of the
		OEC matrix for conditioning and convergence of
		the unpreconditioned data assimilation problem. 
		In this paper we examine the use of correlated OEC matrices in the preconditioned data assimilation problem for the first time. We consider the case where there are more state variables than observations, which is typical for applications with sparse measurements e.g. NWP and remote sensing.
		We find that similarly to the unpreconditioned problem, the minimum eigenvalue of
		the OEC matrix appears in new bounds on the condition number of the Hessian of the preconditioned objective function. Numerical experiments reveal that the condition number of the Hessian is minimised when the background and observation lengthscales are equal.
		This contrasts with the unpreconditioned case, where decreasing the observation error lengthscale always improves conditioning.  Conjugate gradient experiments {\color{blue}show} that in this framework the condition number of the Hessian is a good proxy for convergence. Eigenvalue clustering explains cases where convergence is faster than expected.} 

	\keywords{data assimilation; least squares; correlated observation error covariance; condition number; Hessian; preconditioning}
	
	\maketitle

	\section{Introduction}

{\color{black}Data assimilation algorithms combine observations of a dynamical system{\color{black}, $\bfy_i\in\mathbb{R}^{p_i}$ at times $t_i$,} with prior information from a model{\color{black}, $\bfx^b \in \mathbb{R}^N$} to find {\color{black}$\bfx_i\in \mathbb{R}^N$}, the most likely state of the system at {\color{black}time $t_i$}.
	In variational data assimilation
	the posterior is computed by solving a nonlinear least squares problem.		In this paper we examine the effect of using correlated observation error covariance matrices on the convergence of the preconditioned variational data assimilation problem. We develop new bounds on the condition number of the Hessian of the linearised  preconditioned objective function. 
	Numerical experiments allow us to compare the bounds to the computed condition number. We also investigate the relationship between conditioning, the full spectrum of the Hessian and convergence of a linear data assimilation test problem to assess the suitability of using the condition number of the  Hessian as a proxy for convergence in this setting.

	We now define the variational data assimilation objective function of interest for this paper.} 
{\color{black} For a time window $[t_0,t_n]$, we	let $\bfx^t_i\in \mathbb{R}^N$ be the true state of the dynamical system of interest at time $t_i$, where $N$ is the number of state variables. 	
	The prior, or background state, is valid at the initial time $t_0$ and can be written as an approximation to the true state as $\bfx^b = \bfx^t_0 + \epsilon^b$. We assume that the background errors $\epsilon^b\sim\mathcal{N} (0,\bfB) $, where $\bfB\in\mathbb{R}^{N\times N}$ is the background error covariance matrix. 
	As observations can be made at different locations, or of different variables to those in the state vector $\bfx_i$,  we define an observation operator $h_i:\mathbb{R}^{N} \rightarrow \mathbb{R}^{p_i}$ which maps from state variable space to observation space at time $t_i$. 
	Observations $\bfy_i\in\mathbb{R}^{p_i}$ at time $t_i$ are similarly expressed as $\bfy_i = h_i[\bfx^t_i]+\epsilon_i$ for $i=0,\dots,n$: the sum of the model equivalent $h_i[\bfx^t_i]$ and an observation error  $\epsilon_i\sim \mathcal{N}(0,\bfR_i)$,  where  $\bfR_i\in\mathbb{R}^{p_i\times p_i}$ are the observation error covariance (OEC) matrices.		We additionally assume that the observation and background errors are mutually uncorrelated. 
	The total number of observations across the whole time window is given by $p=\sum_{i=0}^{n}p_i$. 
	The state $\bfx_{i-1}$ at time $t_{i-1}$  is propagated to the next observation time $t_{i}$ using a nonlinear forecast model operator, $\mathcal{M}$, to obtain
\begin{equation}\label{eq:modelconstraint}
	\bfx_i = \mathcal{M}(t_{i-1},t_i,\bfx_{i-1}).
	\end{equation}
	
	In variational data assimilation the analysis, $\bfx_0$, or most likely {\color{black} state at the initial time $t_0$,}  minimises the  full 4D-Var objective function, given by 
	\begin{equation} \label{eq:6:CostFn}
	J(\bfx_0)=\frac{1}{2}(\bfx_0-\bfx^b)^T\mathbf{B}^{-1}(\bfx_0-\bfx^b)+\frac{1}{2}\sum_{i=0}^{n}(\bfy_i-h_i[\bfx_i])^T\bfR_i^{-1}(\bfy_i-h_i[\bfx_i]). 
	\end{equation}
	{\color{black} In applications such as numerical weather prediction (NWP), the non-linear objective function \eqref{eq:6:CostFn} is typically minimised using an iterative method.} }
The most common implementation is the incremental formulation, which solves the variational data assimilation problem via a small number of nonlinear outer loops, and a larger number of inner loop iterations which minimise a linearised least squares problem \cite{Courtier94}. This procedure is equivalent to a Gauss-Newton method \cite{GrattonS.2007AGMf,lawless05,lawless05b} {\color{black} and will be presented in Section 2}.

For many systems in the geosciences and neurosciences\cite{Carrassi18,SchiffStevenJ2011NCET}  the number of state variables, {\color{black}$N$,} can be of the order of $10^9$. 
{\color{black}In this paper we consider the case where the number of state variables is greater than the number of observations, {\color{black}i.e. $N>p$}, an assumption which holds for applications with sparse measurement data.
	The large dimension of the state space motivates the use of
	a control variable transform (CVT) to model the background error covariance matrix, $\bfB$, implicitly \cite{bannister08}.  The CVT uses the square root of {\color{black}$\bfB$} as a variable transform to obtain a modified objective function \cite[Sec 9.1]{lewis06}, and can be interpreted as a form of preconditioning.  The transformation diagonalises the weighting on the first term of {\color{black}\eqref{eq:6:CostFn}}, making the transformed state variables uncorrelated. 
	We refer to the incremental variational problem with the CVT as the preconditioned data assimilation problem for the remainder of this paper.

	As the inner iterations of the incremental 4D-Var algorithm solve a linear least squares problem, the conjugate gradient method can be used for the minimisation of the linearised objective function \cite{Fisher98,Liu2018,Tremolet07}.  
	Convergence of a conjugate gradient method can be bounded by the condition number of the Hessian of the objective function \cite{Gill86,golub96,haben11c}; {\color{black}therefore} the condition number of the linearised Hessian {\color{black} can be considered} as a proxy to study how changes to a data assimilation method are likely to affect convergence of the inner loop. 
	{\color{black}The Hessian of the linearised preconditioned objective function is given by
		\begin{equation}\label{eq:Hessian}
		\Sh = \bfI+\sum_{i=0}^n \bfB^{-1/2}\bfM_i^T\bfH_i^T\bfR_i^{-1}\bfH_i\bfM_i\bfB^{-1/2},
		\end{equation}
		where $\bfH_i\in \mathbb{R}^{p_i\times N}$ is the linearised observation operator, and $\bfM_i\in\mathbb{R}^{N\times N}$ is the linearisation of the model operator \eqref{eq:modelconstraint}.} The Hessian \eqref{eq:Hessian}
	is a low rank update of the identity matrix, and hence is typically better conditioned than the Hessian corresponding to the unpreconditioned problem (as its minimum eigenvalue is one).
	However, {\color{black}the distribution of the full spectrum, and not just the extreme eigenvalues, is important for the conjugate gradient method (see Theorems 38.3, 38.5 of Trefethen and Bau \cite{TrefethenLloydN.LloydNicholas1997Nla}, and Theorem 38.4 of Gill, Murray and Wright \cite{Gill86})}. In this paper we will therefore consider how the condition number of the Hessian relates to convergence of the conjugate gradient method in an idealised  numerical framework{\color{black}, and examine the distribution of  the full spectrum of \eqref{eq:Hessian}}. 
	
	In recent years there has been a rise in the introduction of correlated OEC matrices {\color{black}($\bfR_i$ in \eqref{eq:6:CostFn})} at NWP centres (e.g. Bormann et al. \cite{bormann16}, Weston et al. \cite{weston14}, Janji\'c et al.\cite{janjic17}). 
	The use of correlated OEC matrices  brings benefit to applications by allowing users to include more observations \cite{stewart13,simonin18} at higher resolutions. Correlated OEC matrices also lead to greater information content of observations, particularly on smaller scales \cite{stewart08,stewart13,fowler17,rainwater15}. However, the move from uncorrelated (diagonal) to correlated (full) covariance matrices has caused problems with the convergence of the data assimilation procedure in experiments at NWP centres \cite{weston11,weston14,tabeart17b}.  Previous studies of the conditioning of the preconditioned Hessian have focussed on the case of uncorrelated OEC matrices \cite{haben11c,haben11b}. In this paper we extend this theory to the case of correlated OEC matrices.
	
	Tabeart et al. \cite{tabeart17a} considered the effect of using correlated (full) OEC matrices within the unpreconditioned data assimilation problem. 
	The minimum eigenvalue of the correlated OEC matrix was found to be important in determining the conditioning of the Hessian of the objective function both theoretically and numerically.
	The condition number of the Hessian was found to be a good proxy for convergence in this framework.
Haben et al. \cite{haben11b,haben11c} developed bounds on the condition number of the Hessian for both the unpreconditioned and preconditioned problems in the case of uncorrelated (diagonal) OEC matrices. 
In the preconditioned case, reducing the observation error variance increases both the bounds on and numerical value of the condition number of the Hessian. The choice of observation network was also shown to be important for determining the conditioning and convergence of the preconditioned problem.

	In this paper we consider the conditioning of the preconditioned variational data assimilation problem in the case of correlated OEC matrices.  We extend the analysis of Tabeart et al.\cite{tabeart17a} to the preconditioned case {\color{black} where there are fewer observations than state variables,} {\color{black} i.e. $p<N$}. We begin in Section~\ref{sec:6:Background} by defining the problem and introducing existing mathematical results relating to conditioning. In Section~\ref{sec:6:PrecondTheory} we present new theoretical bounds on the condition number of the preconditioned Hessian in terms of its constituent matrices.  In Section~\ref{sec:NumFramework} we introduce the numerical framework that will be used for our experiments. We present the results of these experiments and related discussion in Section~\ref{sec:6:Numerics}. These experiments reveal the ratio between background and observation error correlation lengthscales strongly influences the conditioning of the Hessian, with minimum condition numbers occurring when the two lengthscales are equal. This contrasts with the unpreconditioned case, where the condition number of the Hessian could always be reduced by decreasing the lengthscale of the observation error covariances. 
	We find cases where the new bounds represent the qualitative behaviour of the conditioning well, as well as cases where bounds from Haben \cite{haben11c} are tighter.  
	For many cases the condition number of the Hessian is a good proxy for convergence of a conjugate gradient method. Cases where convergence is much faster than expected can be explained by a single large eigenvalue with the remainder clustering around unity.  
	Our conclusions are presented in Section \ref{sec:6:Conclusions}. 
	\section{The preconditioned variational data assimilation problem}\label{sec:6:Background}
	
	\subsection{The control variable transform formulation of the data assimilation problem}\label{sec:6:DA}

	{\color{black}In this section we} define the preconditioned 4D-Var data assimilation problem and introduce {\color{black} further} notation that will be used in this paper.  
	We recall \cite{tabeart17c} that covariance matrices can be decomposed as $\bfB = \Sigma_B\widetilde{\bfB}\Sigma_B$, and $\bfR_i = \Sigma_{R_i}\widetilde{\bfR}_i\Sigma_{R_i}$ where $\Sigma_B,\Sigma_{R_i}$ are diagonal matrices containing standard deviations, and $\widetilde{\bfB},\widetilde{\bfR}_i$ are correlation matrices with unit entries on the diagonal. By definition covariance matrices are symmetric positive semi-definite. However, we will assume in what follows that $\widetilde{\bfB}, \widetilde{\mathbf{R}}_i,\bfB$ and $\bfR_i$ are strictly positive definite, and therefore their inverses are well-defined.

	We now derive the linearised incremental objective function. In this formulation instead of finding the state which minimises the objective function \eqref{eq:6:CostFn} directly, subject to the model constraint \eqref{eq:modelconstraint},
	we minimise a sequence of linearisations of the objective function to obtain a sequence of increments to the background, $\bfx^b$. 
	Typically this is done via a series of outer loops, where the forecast model and observation operators are linearised about the current best estimate of $\bfx_0$. 

	For the $l$th outer loop  we define $\bfx_0^{(l+1)} = \bfx_0^{(l)}+\delta\bfx_0^{(l)}$. We then consider the Taylor expansion of $\mathcal{M}(t_{i-1},t_i;\bfx_{i-1}^{(l)})$ 
	and obtain the linearisation
$\delta\bfx_i^{(l)} =  \bfM_{i}\delta\bfx_{i-1}^{(l)}$
	{where $\bfM_{i}\in\mathbb{R}^{N\times N}$ is the linearised model operator at time $t_i$, linearised about the model forecast initialised at $\bfx_0^{(l)}$.} Finally we denote $\delta\bfx^{(l)}_b = \bfx^b-\bfx_0^{(l)}$, with $\bfx_{0}^{(0)}= \bfx^b$ and $\delta\bfx_0^{(0)}=0$.
	
	Similarly, expanding $h_i[\bfx_i]$ about $\bfx_i^{(l)}$ we obtain the linearisation
	{\color{black}$	h_i[\bfx_i^{(l)}+\delta\bfx_i^{(l)}] \approx h_i[\bfx_i^{(l)}] +\bfH_i\delta\bfx_i^{(l)}$}
	where $\bfH_i\in\mathbb{R}^{p_i\times N}$ is the linearised observation operator at time $t_i$   linearised about $\bfx_i^{(l)}$.

	We then write the linearised objective function in terms of $\delta\bfx_0^{(l)}$,
	\begin{equation}\label{eq:6:IncCostFn}
	\widetilde{J}(\delta\bfx_0^{(l)})=\frac{1}{2}(\delta\bfx_0^{(l)} - \delta\bfx_b^{(l)})^T\mathbf{B}^{-1}(\delta\bfx_0^{(l)} - \delta\bfx_b^{(l)})+\frac{1}{2}\sum_{i=0}^{n}(\bfd_i^{(l)} - \bfH_i\delta\bfx_i^{(l)} )^T\bfR_i^{-1}(\bfd_i^{(l)} - \bfH_i\delta\bfx_i^{(l)})
	\end{equation}
	where $\bfd_i^{(l)} = \bfy_i - h_i[\bfx^{(l)}_i]$ are the innovation vectors. 	These measure the misfit between the observations and the linearised state, using the full nonlinear observation operator.
	
	In order to simplify the notation in what follows we can group the linearised forecast model and observation operator terms together as a single linear operator. 
	We  define the generalised observation operator as 	\begin{equation}\label{eq:6:GenH}
	\widehat{\bfH} = \left[\bfH_0^T,(\bfH_1\widehat{\bfM}_1)^T,\dots,(\bfH_n\widehat{\bfM}_n)^T\right]^T \in \mathbb{R}^{N(n+1)\times p(n+1)},
	\end{equation}
	where the linearised forward model from time $t_0$ to time $t_i$ is given by
	\begin{equation}
	\widehat{\bfM}_i\delta\bfx_0^{(l)} 
	=\bfM_i\dots\bfM_1\delta\bfx_0^{(l)}.
	\end{equation}
	Finally we let $\widehat{\bfR} \in\mathbb{R}^{p\times p}$ denote the block diagonal matrix with the $i$th block consisting of $\bfR_i$.	
	This allows us to write the Hessian of the linearised objective function, \eqref{eq:6:IncCostFn}, in the simplified form
	\begin{equation}\label{eq:Hessianunprecond}
	\bfS = \bfB^{-1} + \widehat{\bfH}^T\widehat{\bfR}^{-1}\widehat{\bfH}.
	\end{equation}
	The formulation of the objective function given by \eqref{eq:6:IncCostFn} is too expensive to be used in practice both in terms of computation, but also storage. The number of state variables, $N$, is very large and typically $\bfB$ cannot be stored explicitly.
	
	The control variable transform (CVT) formulates the objective function in terms of alternative `control variables', which means that the background matrix $\bfB$ does not need to be stored explicitly.    The CVT  is described in detail by Bannister \cite{bannister08,Bannister17}, and is often used in NWP applications. 

	The CVT may be applied to the incremental form of the variational problem \eqref{eq:6:IncCostFn}, via the change of variable $
	\delta\mathbf{ z}^{(l)}_0=	\bfB^{-1/2}\delta\bfx_0^{(l)}  $. This yields the objective function
	\begin{equation}\label{eq:6:PrecondCostFn}
	\widehat{J}(\delta\textbf{z}_0^{(l)})= \frac{1}{2}(\delta\textbf{z}_0^{(l)}-\delta\bfz_b^{(l)})^T (\delta\textbf{z}_0^{(l)}-\delta\bfz_b^{(l)})+ \frac{1}{2}\left(\widehat{\textbf{d}}^{(l)}- \widehat{\bfH}\bfB^{1/2}\delta\textbf{z}_0^{(l)}\right)^T\widehat{\bfR}^{-1}\left(\widehat{\textbf{d}}^{(l)}-\widehat{\bfH}\bfB^{1/2}\delta\textbf{z}_0^{(l)}\right),
	\end{equation}

	where  $\delta\bfz_b^{(l)} = \bfB^{-1/2}\delta\bfx_b^{(l)}$,
	and
	
	\begin{equation}
	\widehat{\textbf{d}}^{{(l)}^T}=\left[\bfd_o^{{(l)}^T},\bfd_1^{{(l)}^T},\dots,\bfd_n^{{(l)}^T}\right]
	\end{equation}
	is a vector made up of the innovation vectors.

	This yields a Hessian for the incremental 4D-Var problem with the CVT given by
	\begin{equation}\label{eq:6:Sh}
	\widehat{\bfS}=\bfI_N + \bfB^{1/2}\widehat{\bfH}^T\widehat{\bfR}^{-1}\widehat{\bfH}\bfB^{1/2}.
	\end{equation}
	Therefore using the CVT  is equivalent to pre- and post-multiplying the Hessian of the incremental data assimilation problem \eqref{eq:Hessianunprecond} by $\bfB^{1/2}$ (the uniquely defined, symmetric square root of $\bfB$). 
	{\color{black} The exact value of $\bfB^{-1/2}$ is not computed, but rather an approximation is constructed using physical and statistical knowledge of the system of interest \cite{bannister08}. }
	The CVT can be interpreted as preconditioning the Hessian by $\bfB^{1/2}$. The data assimilation formulation described in \eqref{eq:6:PrecondCostFn} is often referred to as the preconditioned data assimilation problem, and this naming convention will be used throughout the remainder of the paper.	
	We note that as we assume ${\bfB}$ and $\widehat{\bfR}$  are strictly positive definite, $\Sh$ is also symmetric positive definite. 

	The preconditioned Hessian \eqref{eq:6:Sh} highlights the computational benefit of using the CVT. 
	{\color{black} For most NWP applications there are} fewer observations than state variables (typically a difference of two orders of magnitude \cite{Carrassi18}), meaning that the second term in \eqref{eq:6:Sh} is rank deficient. Therefore the preconditioned Hessian is a low-rank update to the identity, and hence its minimum eigenvalue is unity. This guarantees that the preconditioned Hessian will not suffer from small minimum eigenvalues that often result in ill-conditioning for the unpreconditioned problem. This improved conditioning is expected to lead to faster convergence of the associated data assimilation algorithm. 
	
	In this paper we study the conditioning of the Hessian of the CVT objective function \eqref{eq:6:PrecondCostFn} as a proxy for convergence of the preconditioned data assimilation problem. We develop bounds on the condition number of \eqref{eq:6:Sh} in terms of its constituent matrices. Separating the contribution of each matrix in the bounds allows us to investigate the effect of changes to each component of the data assimilation system on conditioning and convergence. In particular, we focus on the introduction of  correlated observation error covariance matrices within the preconditioned framework.
	
	\subsection{{\color{black} Some inequalities for the eigenvalues of the product of  positive semidefinite Hermitian matrices}}	\label{sec:6:EvalTheory}
	
	For the remainder of this manuscript, we use the following order of eigenvalues: For a matrix $\bfA \in \mathbb{R}^{k\times k}$ let the eigenvalues $\lambda_i$ be such that  $\lambda_{max}(\bfA) = \lambda_{1}(\bfA) \ge \lambda_{2}(\bfA) \ge \dots \ge \lambda_k(\bfA) = \lambda_{min}(\bfA)$.

	In this section we introduce theoretical results from linear algebra. These will be used in Section \ref{sec:6:PrecondTheory} to develop new bounds on the condition number of the preconditioned Hessian in terms of its constituent matrices.   We also present existing bounds on the Hessian of  the preconditioned 3D-Var problem. {\color{black} The 3D-Var problem is obtained by setting $n=0$ in \eqref{eq:6:CostFn}.} In the numerical experiments of Section \ref{sec:6:Numerics}  we will compare these existing bounds with the new bounds developed in Section \ref{sec:6:PrecondTheory}.
	
	We begin by formally defining the condition number.
	\begin{mydef}\label{def:6:condS}
		\cite[Sec. 2.7.2]{golub96} For $\bfA \in \mathbb{R}^{k\times k}$ {\color{black} symmetric positive-definite we define the condition number $\kappa(\bfA)$ by 
			\begin{equation}
			\kappa(\bfA) = ||\bfA||||\bfA^{-1}||.
			\end{equation}}
		We then characterise the condition number in the $2-$norm of $\bfA$ as
		\begin{equation}\label{eq:condno}
		\kappa_2(\bfA) = ||\bfA||_2||\bfA^{-1}||_2 = \frac{\lambda_1(\bfA)}{\lambda_k(\bfA)},
		\end{equation}
		{\color{black} where $\lambda_i$ are the eigenvalues of $\bfA$. 
			The condition number in the 2-norm} shall be referred to as the condition number {\color{black} and denoted $\kappa(\bfA)$} for the remainder of this work.
	\end{mydef}
	We recall our additional assumption that both $\bfB$ and $\widehat{\bfR}$ are strictly positive definite. Since $\Sh$ is symmetric positive definite we apply the characterisation of the condition number given by \eqref{eq:condno} throughout this paper.

	We present two results which bound the eigenvalues of a matrix product in terms of the product of the eigenvalues of the individual matrices. These will be used in Section \ref{sec:6:PrecondTheory} to separate the contribution of the background and observation error covariance matrices to  $\kappa(\Sh)$.
	\begin{theorem} \label{theorem:6:lidskii} 
		Let $\bfF, \mathbf{G} \in \mathbb{C}^{d\times d}$ be positive semi-definite Hermitian matrices and {\color{black}let $i_1, \dots i_k$ denote an ordered  subset of the integers $\{1,\dots ,d\}$}. Then
		\begin{equation}
		\sum_{t=1}^{k}\lambda_{i_t}(\mathbf{F}\mathbf{G}) \le \sum_{t=1}^{k}\lambda_{i_t}(\mathbf{F}) \lambda_t(\mathbf{G}), \quad k=1,\dots,d-1.
		\end{equation}
	\end{theorem}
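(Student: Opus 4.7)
The plan is to recognise the statement as a multiplicative, indexed analogue of the classical Lidskii eigenvalue inequality, and to establish it in three steps: reduce the spectral problem for $\mathbf{FG}$ to a Hermitian one, prove the corresponding log-majorization, and then transfer this to the stated additive inequality.

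First, I would observe that although $\mathbf{FG}$ need not be Hermitian, its spectrum is real and non-negative and coincides with that of the Hermitian positive semi-definite matrix $\mathbf{G}^{1/2}\mathbf{F}\mathbf{G}^{1/2}$, via the similarity $\mathbf{G}^{1/2}(\mathbf{FG})\mathbf{G}^{-1/2} = \mathbf{G}^{1/2}\mathbf{F}\mathbf{G}^{1/2}$ when $\mathbf{G}$ is invertible, and a standard continuity argument (perturbing $\mathbf{G}$ to $\mathbf{G}+\varepsilon\mathbf{I}$ and letting $\varepsilon\downarrow 0$) when it is not. This reduction justifies using Hermitian-matrix tools such as Courant--Fischer and the Ky Fan / Wielandt minimax principles in what follows.

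Next, I would establish the log-majorization
\begin{equation*}
\prod_{t=1}^{k}\lambda_{i_t}(\mathbf{FG}) \le \prod_{t=1}^{k}\lambda_{i_t}(\mathbf{F})\lambda_t(\mathbf{G}), \qquad k=1,\dots,d-1,
\end{equation*}
for every ordered subset $1\le i_1<\dots<i_k\le d$. One route is induction on $k$ using Weyl's individual multiplicative inequality $\lambda_{i+j-1}(\mathbf{FG})\le\lambda_i(\mathbf{F})\lambda_j(\mathbf{G})$ combined with a Wielandt-type extremal characterisation of products of eigenvalues. A cleaner alternative, valid when $\mathbf{F},\mathbf{G}$ are positive definite, is to apply the classical additive Lidskii theorem to the Hermitian logarithms $\log\mathbf{F}$ and $\log\mathbf{G}$ (via the Gel'fand--Naimark decomposition) and then exponentiate; the general positive semi-definite case follows by the same perturbation argument used in the first step.

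Finally, I would transfer the log-majorization to the stated sum inequality. A key observation is that both sequences $a_t := \lambda_{i_t}(\mathbf{FG})$ and $b_t := \lambda_{i_t}(\mathbf{F})\lambda_t(\mathbf{G})$ are non-negative and non-increasing in $t$: the former because $i_1<\dots<i_k$ and eigenvalues are ordered decreasingly, the latter because it is the pointwise product of the two non-negative decreasing sequences $(\lambda_{i_t}(\mathbf{F}))_t$ and $(\lambda_t(\mathbf{G}))_t$. The standard implication that log-majorization entails weak majorization for decreasing non-negative sequences (obtained by applying the increasing convex function $\exp$) then yields $\sum_{t=1}^k a_t \le \sum_{t=1}^k b_t$, which is exactly the conclusion of the theorem. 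The main obstacle is the middle step: the unindexed version ($i_t=t$) of the log-majorization is classical, but handling arbitrary ordered subsets requires careful bookkeeping of how Weyl's inequalities compose across the selected indices, or --- via the cleaner route --- a careful justification of the logarithmic reduction for singular $\mathbf{F}$ or $\mathbf{G}$.
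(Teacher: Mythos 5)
The paper does not actually prove this result; it cites Theorem 3 of Wang and Zhang \cite{wang92}, so your argument has to stand on its own. Its outer architecture is sound. The reduction $\lambda(\bfF\mathbf{G})=\lambda(\mathbf{G}^{1/2}\bfF\mathbf{G}^{1/2})$ is correct, and needs no perturbation argument: $\bfF\mathbf{G}=(\bfF\mathbf{G}^{1/2})\mathbf{G}^{1/2}$ and $\mathbf{G}^{1/2}(\bfF\mathbf{G}^{1/2})$ share a characteristic polynomial. The final transfer is also correct: if the product inequality holds for every ordered subset then it holds for every prefix $i_1<\dots<i_j$ with $j\le k$, and since both $t\mapsto\lambda_{i_t}(\bfF\mathbf{G})$ and $t\mapsto\lambda_{i_t}(\bfF)\lambda_t(\mathbf{G})$ are non-negative and non-increasing, weak log-majorization implies weak majorization by the standard convexity argument.

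The genuine gap is exactly where you flag it: neither of your two routes to the log-majorization works as described. Applying the additive Lidskii theorem to $\log\bfF$ and $\log\mathbf{G}$ and exponentiating controls the eigenvalues of $e^{\log\bfF+\log\mathbf{G}}$, not those of $\bfF\mathbf{G}$; the comparison between the two (Golden--Thompson/Araki--Lieb--Thirring) reads $\prod_{t=1}^k\lambda_t(e^{\log\bfF+\log\mathbf{G}})\le\prod_{t=1}^k\lambda_t(\bfF\mathbf{G})$, which points the wrong way for the upper bound you need. Likewise, composing Weyl's multiplicative inequality termwise over the selected indices gives only $\lambda_{i_t}(\bfF\mathbf{G})\le\lambda_{i_t-t+1}(\bfF)\lambda_t(\mathbf{G})$, which is weaker than the claim because $\lambda_{i_t-t+1}(\bfF)\ge\lambda_{i_t}(\bfF)$; Lidskii-type inequalities are strictly stronger than anything obtained by summing Weyl's inequalities, which is why Lidskii's theorem required Wielandt's minimax in the first place. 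What your middle step actually requires is the Gelfand--Naimark theorem for singular values, $\prod_{t=1}^k\sigma_{i_t}(\mathbf{X}\mathbf{Y})\le\prod_{t=1}^k\sigma_{i_t}(\mathbf{X})\sigma_t(\mathbf{Y})$, applied to $\mathbf{X}=\bfF^{1/2}$ and $\mathbf{Y}=\mathbf{G}^{1/2}$ and then squared. That is a citable textbook result (e.g. Bhatia, \emph{Matrix Analysis}, Section III.4), but it is of the same depth as the statement you are trying to prove, so you must either cite it explicitly or prove it (typically via antisymmetric tensor powers plus a Wielandt-type minimax for products); as written, the central inequality is asserted rather than established.
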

	
	\begin{proof}
		The proof is given by Wang and Zhang \cite{wang92}, Theorem 3.
	\end{proof}

	\begin{theorem}\label{theorem:6:wangsum}
		Let $\bfF, \mathbf{G} \in \mathbb{C}^{d\times d}$ be positive semidefinite Hermitian and {\color{black}let $i_1, \dots i_k$ denote an ordered  subset of the integers $\{1,\dots ,d\}$}. Then
		\begin{equation}
		\sum_{t=1}^{k} \lambda_{i_t}(\bfF\mathbf{G})\ge \sum_{t=1}^k \lambda_{i_t}(\bfF)\lambda_{d-t+1}(\mathbf{G}).
		\end{equation}
	\end{theorem}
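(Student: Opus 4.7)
My plan is to prove this companion lower bound to Theorem~\ref{theorem:6:lidskii} using the same framework that Wang and Zhang~\cite{wang92} employ for their upper bound. As a preliminary reduction I would replace $\mathbf{G}$ by $\mathbf{G}+\epsilon\bfI$, establish the inequality for each $\epsilon>0$, and pass to the limit $\epsilon\to 0^{+}$ using continuity of eigenvalues. With $\mathbf{G}$ positive definite, $\bfF\mathbf{G}$ is similar to the Hermitian positive semidefinite matrix $\mathbf{G}^{1/2}\bfF\mathbf{G}^{1/2}$, so all of its eigenvalues are real nonnegative and admit a Courant--Fischer minimax characterisation.

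The most direct route I would try is to derive the lower bound from Theorem~\ref{theorem:6:lidskii} via a complementary-sum identity. Let $\{j_{1}<\cdots<j_{d-k}\}$ denote the ordered complement of $\{i_{1}<\cdots<i_{k}\}$ in $\{1,\dots,d\}$. Applying Theorem~\ref{theorem:6:lidskii} to the complementary index set yields
\begin{equation*}
    \sum_{s=1}^{d-k}\lambda_{j_{s}}(\bfF\mathbf{G})\le\sum_{s=1}^{d-k}\lambda_{j_{s}}(\bfF)\lambda_{s}(\mathbf{G}),
\end{equation*}
and combining this with $\sum_{t=1}^{d}\lambda_{t}(\bfF\mathbf{G})=\operatorname{tr}(\bfF\mathbf{G})$ produces the preliminary bound
\begin{equation*}
    \sum_{t=1}^{k}\lambda_{i_{t}}(\bfF\mathbf{G})\ge\operatorname{tr}(\bfF\mathbf{G})-\sum_{s=1}^{d-k}\lambda_{j_{s}}(\bfF)\lambda_{s}(\mathbf{G}).
\end{equation*}

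If the complementary-sum identity alone does not close the estimate, I would combine it with the additive Lidskii inequality for Hermitian matrices applied to the decomposition
\begin{equation*}
    \bfF^{1/2}\mathbf{G}\bfF^{1/2}=\lambda_{d}(\mathbf{G})\bfF+\bfF^{1/2}\bigl(\mathbf{G}-\lambda_{d}(\mathbf{G})\bfI\bigr)\bfF^{1/2},
\end{equation*}
which writes the Hermitian matrix similar to $\bfF\mathbf{G}$ as a sum of two positive semidefinite Hermitian matrices and peels off the smallest eigenvalue of $\mathbf{G}$. This opens an induction on the number of distinct eigenvalues of $\mathbf{G}$, with the base case $k=1$ following from the Loewner inequality $\mathbf{G}\succeq\lambda_{d}(\mathbf{G})\bfI$ applied under conjugation by $\bfF^{1/2}$.

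The main obstacle I anticipate is aligning the eigenvalue indices so that the reverse pairing $\lambda_{i_{t}}(\bfF)\leftrightarrow\lambda_{d-t+1}(\mathbf{G})$ emerges cleanly: the complementary upper bound naturally produces the \emph{same-order} pairing within the complement, whereas the target requires the reverse pairing within the selected indices, and a rearrangement (or von Neumann trace) argument will be required to reconcile the two orderings. Should this index bookkeeping prove intractable, the statement can be invoked directly, since it is the natural companion of Theorem~3 of Wang and Zhang~\cite{wang92} and is proved there using essentially the same exterior-algebra machinery used for Theorem~\ref{theorem:6:lidskii}.
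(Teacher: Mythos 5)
The paper's entire ``proof'' of this statement is a citation: it is Theorem~4 of Wang and Zhang \cite{wang92}, exactly as your closing sentence observes, so your fallback position coincides with what the paper actually does. Of the constructive routes you sketch, however, the complementary-sum reduction cannot close the gap on its own: after subtracting from the trace it would require
$\operatorname{tr}(\bfF\mathbf{G})\ge\sum_{s=1}^{d-k}\lambda_{j_s}(\bfF)\lambda_s(\mathbf{G})+\sum_{t=1}^{k}\lambda_{i_t}(\bfF)\lambda_{d-t+1}(\mathbf{G})$,
whose right-hand side is a mixed-permutation pairing of the two spectra. Already for $d=2$, $k=1$, $i_1=2$ this becomes the same-order pairing $\lambda_1(\bfF)\lambda_1(\mathbf{G})+\lambda_2(\bfF)\lambda_2(\mathbf{G})$, which by von Neumann's trace inequality is an \emph{upper} bound for $\operatorname{tr}(\bfF\mathbf{G})$ (strictly larger whenever the eigenbases are not aligned, e.g.\ $\bfF=\mathrm{diag}(2,1)$ and $\mathbf{G}$ with the same spectrum in a rotated basis), so the required direction fails. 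The $\epsilon$-regularisation, the similarity of $\bfF\mathbf{G}$ to $\mathbf{G}^{1/2}\bfF\mathbf{G}^{1/2}$, and the peeling decomposition are all sound ingredients, but the peeling step as written only yields the weaker bound with $\lambda_d(\mathbf{G})$ in every term, and upgrading it to the reverse-ordered pairing $\lambda_{i_t}(\bfF)\lambda_{d-t+1}(\mathbf{G})$ amounts to reproving Wang and Zhang's Theorem~4 — which the paper, and you in the end, simply invoke.
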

	\begin{proof}
		The proof is given by Wang and Zhang \cite{wang92}, Theorem 4.
	\end{proof}

	These results will be used to develop bounds on the condition number of the Hessian \eqref{eq:6:Sh}.

	We now present an existing bound on the condition number of the 3D-Var preconditioned Hessian, $\kappa(\Sh)$, from Haben \cite{haben11c}.

	\begin{theorem}\label{thm:6:habenbound}
		Let $\bfB \in \mathbb{R}^{N\times N}$ be the background error covariance matrix and $\bfR \in \mathbb{R}^{p\times p}$ be the observation error covariance matrix with $p < N$. Then the following bounds are satisfied by the condition number of the preconditioned 3D-Var Hessian
		$\widehat{\mathbf{S}}=\mathbf{I}_{N}+\mathbf{B}^{1 / 2} \mathbf{H}^{T} \mathbf{R}^{-1} \mathbf{H B}^{1 / 2}$
		\begin{equation}\label{eq:6:habenbound}
		1+\frac{1}{p} \sum_{i, j=1}^{p}\left(\mathbf{R}^{-1 / 2} \mathbf{H B H}^{T} \mathbf{R}^{-1 / 2}\right)_{i, j} \leq \kappa(\widehat{\mathbf{S}}) \leq 1+\left\|\mathbf{R}^{-1 / 2} \mathbf{H} \mathbf{B} \mathbf{H}^{T} \mathbf{R}^{-1 / 2}\right\|_{\infty}.
		\end{equation}
	\end{theorem}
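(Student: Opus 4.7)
The plan is to exploit the low-rank structure of $\widehat{\bfS}$ so as to reduce the condition number to the maximum eigenvalue of a $p\times p$ matrix, and then bound that eigenvalue from above and below by elementary inequalities.

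First I would note that since $p<N$ and $\bfH\in\mathbb{R}^{p\times N}$, the matrix $\bfB^{1/2}\bfH^T\bfR^{-1}\bfH\bfB^{1/2}$ has rank at most $p$, so at least $N-p$ of its eigenvalues are zero. It is also positive semidefinite because $\bfR^{-1}$ is positive definite and $\bfB^{1/2}$ is symmetric. Hence $\widehat{\bfS}=\bfI_N+\bfB^{1/2}\bfH^T\bfR^{-1}\bfH\bfB^{1/2}$ has eigenvalue $1$ with multiplicity at least $N-p$, and all remaining eigenvalues are $\ge 1$. Therefore $\lambda_{\min}(\widehat{\bfS})=1$ and, by Definition \ref{def:6:condS},
\begin{equation*}
\kappa(\widehat{\bfS}) = \lambda_{\max}(\widehat{\bfS}) = 1+\lambda_{\max}\!\left(\bfB^{1/2}\bfH^T\bfR^{-1}\bfH\bfB^{1/2}\right).
\end{equation*}
Next, using the fact that $\bfA\bfC$ and $\bfC\bfA$ share the same nonzero eigenvalues for conformable $\bfA,\bfC$, I would rewrite the $N\times N$ matrix $\bfB^{1/2}\bfH^T\bfR^{-1}\bfH\bfB^{1/2}$ as having the same nonzero spectrum as the $p\times p$ symmetric positive semidefinite matrix $\bfM:=\bfR^{-1/2}\bfH\bfB\bfH^T\bfR^{-1/2}$. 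Thus
\begin{equation*}
\kappa(\widehat{\bfS}) = 1+\lambda_{\max}(\bfM).
\end{equation*}

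For the upper bound I would invoke the standard inequality $\|\bfA\|_2\le \sqrt{\|\bfA\|_1\|\bfA\|_\infty}$, together with the fact that for a symmetric matrix $\|\bfA\|_1=\|\bfA\|_\infty$, so that $\lambda_{\max}(\bfM)=\|\bfM\|_2\le\|\bfM\|_\infty$. Substituting gives the stated upper bound. For the lower bound I would use the Rayleigh quotient: taking the unit vector $\bfe=p^{-1/2}(1,\ldots,1)^T\in\mathbb{R}^p$, one has $\lambda_{\max}(\bfM)\ge \bfe^T\bfM\bfe = \tfrac{1}{p}\sum_{i,j=1}^p M_{ij}$, which upon adding $1$ yields the left-hand side of \eqref{eq:6:habenbound}.

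The proof is essentially a sequence of standard linear algebra manipulations, so I do not anticipate a genuine obstacle. The only mildly delicate step is the reduction from the $N\times N$ Hessian to the $p\times p$ matrix $\bfM$; once the rank argument has fixed $\lambda_{\min}(\widehat{\bfS})=1$ and the cyclic trick has moved the problem into $\mathbb{R}^{p\times p}$, the two bounds follow immediately from a norm inequality and a Rayleigh quotient evaluated at the constant vector, respectively.
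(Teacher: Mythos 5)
Your proof is correct, and it follows the same route as the result it reproduces: the paper itself defers to Haben (Theorem 6.2.1) for this proof, but your reduction $\kappa(\Sh)=1+\lambda_{1}(\bfR^{-1/2}\bfH\bfB\bfH^T\bfR^{-1/2})$ is exactly the content of Lemma \ref{lem:6:kShinevals}, and the Rayleigh quotient at the constant vector together with the bound of the spectral radius by $\|\cdot\|_{\infty}$ for a symmetric positive semidefinite matrix is the standard argument behind the cited bounds. That the lower bound is an average row sum and the upper bound a maximum absolute row sum (the very structure exploited in Corollary \ref{lem:6:circulantequivbound}) confirms your two steps are the intended ones.
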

	\begin{proof}
		The proof is given by Haben \cite[]{haben11c}, Theorem 6.2.1.
	\end{proof}
	We note that the result of Theorem \ref{thm:6:habenbound} 	extends naturally to the 4D-Var problem by replacing $\bfR$ with $\widehat{\bfR}$ and $\bfH$ with $\widehat{\bfH}$. 
	As discussed at the end of Section \ref{sec:6:DA}, we want to develop bounds that separate the contribution of each constituent matrix.  This will allow us to study how altering a single term, particularly the observation error covariance matrix, is likely to affect the conditioning and convergence of the preconditioned data assimilation system. 
	As well as being interesting from a theoretical perspective, improved understanding of the influence of individual terms will be useful for practical applications.
	For example, when introducing new observation operators or observation error covariance matrices into operational systems, bounds which separate the role of each matrix will provide insight into  
	how the conditioning of the preconditioned 4D-Var problem is likely to change.
	However, as the bounds given by \eqref{eq:6:habenbound} do not separate out each term, they are likely to be tighter than the new bounds which are presented in Section \ref{sec:6:PrecondTheory}.
	In Section \ref{sec:6:Numerics} we will numerically compare the bounds  given by \eqref{eq:6:habenbound} with those developed in Section \ref{sec:6:PrecondTheory}.

	\section{Theoretical bounds on the Hessian of the preconditioned problem}\label{sec:6:PrecondTheory}
In this section we develop new theoretical bounds on the condition number of the Hessian of the preconditioned variational data assimilation problem, following similar methods to the unpreconditioned case in Tabeart et al. \cite{tabeart17a}. These bounds will all be presented in terms of the Hessian of the {\color{black}preconditioned} 4D-Var problem \eqref{eq:6:Sh}. For the case $n=0$, $\bfRh \equiv \bfR_0\in \mathbb{R}^{p_0\times p_0}$ and $\bfHh \equiv \bfH_0 \in \mathbb{R}^{p_0 \times N}$, meaning that the bounds in this Section will also apply directly to the preconditioned 3D-Var Hessian. This relation will be used in the numerical experiments presented in Section \ref{sec:6:Numerics}. 
	{\color{black}
			
		\begin{mainass}\label{ass:6:4DVar}
			The total number of observations across the time window, $p$, is smaller than the number of state variables, i.e. $p<N$.
		\end{mainass}}

	The first result shows that the condition number can be calculated via the eigenvalues of the rank-$p$ update $\bfB^{1/2}\bfHh^T\bfRh^{-1}\bfHh\bfB^{1/2}$.
	
	\begin{lemma}\label{lem:6:kShinevals}
		Following the Key Assumption  we can express the condition number of $\Sh$ as
		\begin{align}
		\kappa(\Sh) &=1+  \lambda_1(\bfB\bfHh^T\bfRh^{-1}\bfHh) \label{eq:6:kShinevalsa}\\
		&=1+\lambda_1(\bfRh^{-1}\bfHh\bfB\bfHh^T).\label{eq:6:kShinevalsb}
		\end{align}
	\end{lemma}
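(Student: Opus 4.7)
The plan is to reduce the condition number of $\Sh$ to a single eigenvalue of a low-rank positive semidefinite matrix, and then use the standard fact that the nonzero eigenvalues of $XY$ and $YX$ agree to rewrite it in the two required forms.

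First I would observe that $\bfM := \bfB^{1/2}\bfHh^T\bfRh^{-1}\bfHh\bfB^{1/2}$ is symmetric positive semidefinite, because $\bfRh^{-1}$ is symmetric positive definite and $\bfM$ has the form $\bfA^T\bfRh^{-1}\bfA$ with $\bfA = \bfHh\bfB^{1/2}$. Hence all eigenvalues of $\bfM$ are nonnegative, and the eigenvalues of $\Sh = \bfI_N + \bfM$ are exactly $1 + \lambda_i(\bfM)$. Next I invoke the Key Assumption $p < N$: since $\bfA = \bfHh\bfB^{1/2} \in \mathbb{R}^{p\times N}$ has rank at most $p$, the $N\times N$ matrix $\bfM$ has rank at most $p < N$, so $\lambda_N(\bfM) = 0$. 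Therefore $\lambda_{\min}(\Sh) = 1$ and $\lambda_{\max}(\Sh) = 1 + \lambda_1(\bfM)$, so by Definition~\ref{def:6:condS},
\begin{equation*}
\kappa(\Sh) = \frac{\lambda_{\max}(\Sh)}{\lambda_{\min}(\Sh)} = 1 + \lambda_1(\bfM).
\end{equation*}

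To obtain the two stated representations I use the identity that for any conformable matrices $X$ and $Y$, the nonzero eigenvalues of $XY$ and $YX$ coincide (in particular $\lambda_1(XY) = \lambda_1(YX)$ whenever both products are positive semidefinite). Applied with $X = \bfB^{1/2}$ and $Y = \bfHh^T\bfRh^{-1}\bfHh\bfB^{1/2}$ this gives
\begin{equation*}
\lambda_1(\bfM) = \lambda_1(\bfHh^T\bfRh^{-1}\bfHh\bfB^{1/2}\bfB^{1/2}) = \lambda_1(\bfHh^T\bfRh^{-1}\bfHh\bfB) = \lambda_1(\bfB\bfHh^T\bfRh^{-1}\bfHh),
\end{equation*}
where the last equality is another application of the same identity (with $X = \bfHh^T\bfRh^{-1}\bfHh$ and $Y = \bfB$). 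This yields \eqref{eq:6:kShinevalsa}. For \eqref{eq:6:kShinevalsb}, I regroup instead as $X = \bfB^{1/2}\bfHh^T\bfRh^{-1}$ and $Y = \bfHh\bfB^{1/2}$, giving $\lambda_1(\bfM) = \lambda_1(\bfHh\bfB^{1/2}\bfB^{1/2}\bfHh^T\bfRh^{-1}) = \lambda_1(\bfRh^{-1}\bfHh\bfB\bfHh^T)$ after a final cyclic step.

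There is no real obstacle here; the only point that needs care is justifying that $\lambda_{\min}(\Sh) = 1$, which relies explicitly on the Key Assumption $p < N$ forcing $\bfM$ to be rank-deficient. Without that assumption, $\bfM$ could be strictly positive definite and the minimum eigenvalue of $\Sh$ would exceed unity, so the identity $\kappa(\Sh) = 1 + \lambda_1(\bfM)$ would fail. Everything else is a routine application of the cyclic eigenvalue identity.
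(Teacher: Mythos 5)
Your proposal is correct and follows essentially the same route as the paper's proof: both reduce $\kappa(\Sh)$ to $1+\lambda_1(\bfB^{1/2}\bfHh^T\bfRh^{-1}\bfHh\bfB^{1/2})$ using the rank deficiency implied by $p<N$, and then apply the fact that $\bfA\bfB$ and $\bfB\bfA$ share nonzero eigenvalues to obtain the two stated forms. Your version is slightly more explicit about the positive semidefiniteness of the low-rank term and the intermediate regroupings, but the argument is the same.
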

	\begin{proof}
		We begin by showing that $\kappa(\Sh) = 1+ \lambda_{1}(\bfB^{1/2}\bfHh^T\bfRh^{-1}\bfHh\bfB^{1/2})$, as was presented in Haben \cite[]{haben11c}, Equation (4.2).
		We define $\bfB^{1/2}\bfHh^T\bfRh^{-1}\bfHh\bfB^{1/2} = \bfC$ and write $\Sh = \bfI+\bfC$. Let $\lambda_1\ge\lambda_2\ge\dots \ge \lambda_N$ be the eigenvalues of $\bfC$, with corresponding eigenvectors $v_i$. As $p<N$, $\bfC$ is rank-deficient and therefore $\lambda_N=0$.

		Therefore $\lambda_{N}(\Sh) = 1$, and $\kappa(\Sh) = \lambda_{1}(\Sh) = 1+\lambda_1(\bfC)$. 
	{\color{black} Matrices $\bfA\bfB$ and $\bfB\bfA$ have the same non-zero eigenvalues \cite{harville97}, and therefore we can write
			\begin{equation}
			\lambda_1(\bfC) = \lambda_1(\bfB^{1/2}\bfHh^T\bfRh^{-1}\bfHh\bfB^{1/2} ) = \lambda_1(\bfB\bfHh^T\bfRh^{-1}\bfHh) = \lambda_1(\bfRh^{-1}\bfHh\bfB\bfHh^T). 
			\end{equation}
			Hence, we obtain the result
			\begin{equation}
			\kappa(\Sh) = 1+\lambda_{1}(\bfB\bfHh^T\bfRh^{-1}\bfHh) = 1+ \lambda_1(\bfRh^{-1}\bfHh\bfB\bfHh^T).
			\end{equation} }
	\end{proof}			
	The result of Lemma \ref{lem:6:kShinevals} shows that computing $\kappa(\Sh)$ only requires the computation of the maximum eigenvalue of a single matrix product. We also note that the matrix products that appear in \eqref{eq:6:kShinevalsa} and \eqref{eq:6:kShinevalsb} are of different dimensions: $\bfB\bfHh^T\bfRh^{-1}\bfHh \in \mathbb{R}^{N\times N}$ and $\bfRh^{-1}\bfHh\bfB\bfHh^T\in\mathbb{R}^{p\times p}$.
	Additionally, by the Key Assumption (as $p<N$) the first matrix product is always rank deficient, whereas for the case that $\bfHh^T\bfRh^{-1}\bfHh$ is rank $p$, the second matrix product is full rank.
	
	Previous studies \cite{haben11c,tabeart17a} have considered the effect of separately changing the variances and correlations associated with the background  and observation covariances.  In our numerical experiments in Section \ref{sec:6:Numerics}, we will focus on the role of the correlations in $\bfB$ and $\bfR$ in the conditioning of the preconditioned assimilation problem and assume the variances are constant, {\color{black} i.e. $\bfB = \sigma_B^2\widetilde{\bfB}$, $\bfR = \sigma_{R_i}^2\widetilde{\bfR_i}$, where $\sigma_{R_i},\sigma_B \in \mathbb{R}$}.  In that case it is known \cite{haben11b} that  $\kappa(\Sh)$  increases and decreases with the ratio of the background variance to the observation variance.  We therefore assume in the experiments that the variances all take unit values and examine how changes to the background and observation correlations affect the conditioning.

	\subsection{General bounds on the condition number}\label{sec:6:genbounds}
	We now develop bounds on the condition number of $\Sh$ in terms of its constituent matrices. {\color{black}We assume that $\bfB$ and $\widehat{\bfR}$ are strictly positive definite, and that the Key Assumption holds. Otherwise we make no further restrictions on the structure of the constituent matrices in this Section.}
	
	\begin{theorem}\label{cor:6:BnormHRH}
	Given the Key Assumption we can bound $\kappa(\widehat{\bfS}) = \kappa(\textbf{I}_N + \bfB^{1/2}\bfHh^T\bfRh^{-1}\bfHh\bfB^{1/2})$ by 
		\begin{equation}\label{eq:6:BnormHRH}
		\begin{split}
		&	1+ \max \left\{\lambda_1(\bfHh^T\bfRh^{-1}\bfHh)\lambda_N(\bfB), \quad \frac{\lambda_1(\bfHh\bfB\bfHh^T)}{\lambda_1(\bfRh)},\quad\frac{ \lambda_p(\bfHh\bfB\bfHh^T)}{\lambda_p(\bfRh)} \right\} \\
		&\le \kappa(\widehat{\bfS})  \le 1+ \min\left\{ \lambda_{1}(\bfB)\lambda_{1}(\bfHh^T\bfRh^{-1}\bfHh), \quad \frac{\lambda_{1}(\bfHh\bfB\bfHh^T)}{\lambda_p(\bfRh)}\right\}.
		\end{split}
		\end{equation}
	\end{theorem}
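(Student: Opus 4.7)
My starting point is Lemma \ref{lem:6:kShinevals}, which reduces the task to bounding a single maximum eigenvalue, available in two equivalent forms: $\kappa(\widehat{\bfS})-1 = \lambda_1(\bfB\bfHh^T\bfRh^{-1}\bfHh)$ as an $N\times N$ product, and $\kappa(\widehat{\bfS})-1 = \lambda_1(\bfRh^{-1}\bfHh\bfB\bfHh^T)$ as a $p\times p$ product. The plan is to apply Theorem \ref{theorem:6:lidskii} and Theorem \ref{theorem:6:wangsum} with $k=1$ and $i_1=1$ to several factorisations of these two products, using the fact that the constituent factors $\bfB$, $\bfHh^T\bfRh^{-1}\bfHh$, $\bfRh^{-1}$, and $\bfHh\bfB\bfHh^T$ are all symmetric positive semidefinite, and that $\lambda_1$ is invariant under cyclic reorderings of a product (so that either ordering of a two-factor product may be used).

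For the upper bound I would invoke Theorem \ref{theorem:6:lidskii} twice. Writing the $N\times N$ product as $\bfB\cdot(\bfHh^T\bfRh^{-1}\bfHh)$ gives $\lambda_1(\bfB\bfHh^T\bfRh^{-1}\bfHh)\le \lambda_1(\bfB)\lambda_1(\bfHh^T\bfRh^{-1}\bfHh)$. Writing the $p\times p$ product as $\bfRh^{-1}\cdot(\bfHh\bfB\bfHh^T)$ yields $\lambda_1(\bfRh^{-1}\bfHh\bfB\bfHh^T)\le \lambda_1(\bfRh^{-1})\lambda_1(\bfHh\bfB\bfHh^T) = \lambda_1(\bfHh\bfB\bfHh^T)/\lambda_p(\bfRh)$, using $\lambda_1(\bfRh^{-1})=1/\lambda_p(\bfRh)$. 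Both are valid upper bounds on $\kappa(\widehat{\bfS})-1$, so taking their minimum and adding $1$ gives the right-hand side of \eqref{eq:6:BnormHRH}.

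For the lower bound I would invoke Theorem \ref{theorem:6:wangsum} three times. On the $N\times N$ side, I reorder as $(\bfHh^T\bfRh^{-1}\bfHh)\cdot\bfB$ to obtain $\lambda_1\ge \lambda_1(\bfHh^T\bfRh^{-1}\bfHh)\lambda_N(\bfB)$. On the $p\times p$ side, both orderings yield useful bounds: $(\bfHh\bfB\bfHh^T)\cdot\bfRh^{-1}$ gives $\lambda_1\ge \lambda_1(\bfHh\bfB\bfHh^T)\lambda_p(\bfRh^{-1}) = \lambda_1(\bfHh\bfB\bfHh^T)/\lambda_1(\bfRh)$, while $\bfRh^{-1}\cdot(\bfHh\bfB\bfHh^T)$ gives $\lambda_1\ge \lambda_1(\bfRh^{-1})\lambda_p(\bfHh\bfB\bfHh^T) = \lambda_p(\bfHh\bfB\bfHh^T)/\lambda_p(\bfRh)$. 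Taking the maximum of these three lower bounds and adding $1$ yields the left-hand side of \eqref{eq:6:BnormHRH}.

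The main subtle point is the choice of reordering that avoids a rank-deficiency artefact: since $\bfHh^T\bfRh^{-1}\bfHh$ is $N\times N$ of rank at most $p<N$ under the Key Assumption, the alternative factorisation $\bfB\cdot(\bfHh^T\bfRh^{-1}\bfHh)$ combined with Theorem \ref{theorem:6:wangsum} would pull out $\lambda_N(\bfHh^T\bfRh^{-1}\bfHh)=0$ and give the trivial bound. Recognising that the non-trivial lower bound on the $N\times N$ side must instead pull out $\lambda_N(\bfB)$, which is strictly positive by the standing assumption on $\bfB$, is the key observation; the two $p\times p$ lower bounds do not suffer from this collapse because both $\bfRh^{-1}$ and $\bfHh\bfB\bfHh^T$ are full rank (the latter since $\bfB$ is strictly positive definite and, implicitly, $\bfHh$ has full row rank in the relevant applications). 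Apart from this, the proof is a direct substitution into the cited eigenvalue inequalities.
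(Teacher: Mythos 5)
Your proposal is correct and follows essentially the same route as the paper: reduce to $\lambda_1$ of the two equivalent products via Lemma \ref{lem:6:kShinevals}, apply Theorem \ref{theorem:6:lidskii} to both forms for the upper bound, and apply Theorem \ref{theorem:6:wangsum} with both orderings for the lower bound, discarding the ordering on the $N\times N$ side that collapses to zero because $\bfHh^T\bfRh^{-1}\bfHh$ is rank-deficient. Your closing remark that $\bfHh\bfB\bfHh^T$ being full rank requires $\bfHh$ to have full row rank is a harmless aside (the bound remains valid, if trivial, when $\lambda_p(\bfHh\bfB\bfHh^T)=0$), and otherwise the argument matches the paper's step for step.
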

	\begin{proof}
		We write $\kappa(\Sh)$ as in the statement of Lemma \ref{lem:6:kShinevals}.
		To obtain the upper bound of  \eqref{eq:6:BnormHRH}, we use the result of Theorem \ref{theorem:6:lidskii} to separate the contribution of the background and observation term 
		\begin{equation}\label{eq:bound1Thm7}
		\begin{split}
		\kappa(\Sh) &= 1 + \lambda_1(\bfB\bfHh^T\bfRh^{-1}\bfHh) \le	 1+ \lambda_1(\bfB)\lambda_1(\bfHh^T\bfRh^{-1}\bfHh).\\
		\end{split}
		\end{equation}
		Similarly the alternative formulation from Lemma \ref{lem:6:kShinevals} yields
		\begin{equation}
		\begin{split}
		\kappa(\Sh) &= 1 + \lambda_{1}(\bfRh^{-1}\bfHh\bfB\bfHh^T) \\
		&\le
		1+ \frac{1}{\lambda_{p}(\bfRh)}\lambda_{1}(\bfHh\bfB\bfHh^T).
		\end{split}
		\end{equation}

		Combining these two expressions yields the upper bound in the theorem statement.
		
		To compute the lower bound of \eqref{eq:6:BnormHRH}, we apply the result of Theorem \ref{theorem:6:wangsum} to  \eqref{eq:6:kShinevalsa}  with $k=1,i_1=1,d=N$. This yields
		\begin{equation}
		\begin{split}
		\lambda_1(\bfB\bfHh^T\bfRh^{-1}\bfHh)  &\ge \max\left\{ \lambda_1(\bfHh^T\bfRh^{-1}\bfHh) \lambda_N(\bfB), \quad \lambda_N(\bfHh^T\bfRh^{-1}\bfHh) \lambda_1(\bfB) \right\} \\
		& \ge \lambda_1(\bfHh^T\bfRh^{-1}\bfHh) \lambda_N(\bfB).
		\end{split}
		\end{equation}
		This last inequality is due to the fact that $\bfHh^T\bfRh^{-1}\bfHh$ is rank-deficient.
		It follows from Fact 5.11.14 of Bernstein \cite{Bernstein} that 	$\lambda_{i}(\bfRh^{-1}) = \frac{1}{\lambda_{p-i+1}(\bfRh)}$. 
		Applying the result of Theorem \ref{theorem:6:wangsum} to \eqref{eq:6:kShinevalsb}  with $k=1,i_1=1,d=N$ we obtain 
		\begin{equation}\label{eq:bound4Thm7}
		\lambda_1(\bfRh^{-1}\bfHh\bfB\bfHh^T)  	
		\ge \max \left\{\frac{ \lambda_1(\bfHh\bfB\bfHh^T)}{\lambda_1(\bfRh)},\quad \frac{ \lambda_p(\bfHh\bfB\bfHh^T)}{\lambda_p(\bfRh)} \right\}.
		\end{equation}
		Combining the results of \eqref{eq:bound1Thm7}-\eqref{eq:bound4Thm7} yields \eqref{eq:6:BnormHRH} as required.
		
	\end{proof}
	
	We can separate the contribution of the observation error covariance matrix from the observation operator to give the following bound.
	
	\begin{cor} \label{lemma:6:factorisedprecond}
		Under the same conditions as in Theorem \ref{cor:6:BnormHRH}, we can bound $\kappa(\widehat{\bfS})$  by
		
		\begin{equation} \label{eq:6:factoredprecond}
		\begin{split}
		1+ \max\left\{\frac{\lambda_{p}(\bfHh\bfHh^T)\lambda_N(\bfB)}{\lambda_p(\bfRh)}, \frac{\lambda_{1}(\bfHh\bfHh^T)\lambda_N(\bfB)}{\lambda_1(\bfRh)} \right\}\le \kappa(\widehat{\bfS})\le 1 + \frac{\lambda_{1}(\bfB)}{\lambda_{p}(\bfRh)}\lambda_{1}(\bfHh\bfHh^T)
		\end{split}
		\end{equation}
	\end{cor}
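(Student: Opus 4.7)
The plan is to derive this corollary by feeding Theorem \ref{cor:6:BnormHRH} through one further application of the Wang--Zhang inequalities from Theorems \ref{theorem:6:lidskii} and \ref{theorem:6:wangsum}, separating the composite factor $\bfHh\bfB\bfHh^T$ into contributions from $\bfB$ and from $\bfHh\bfHh^T$.

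For the upper bound, I would start from the second term of the upper bound in Theorem \ref{cor:6:BnormHRH}, namely $\kappa(\Sh) \le 1 + \lambda_1(\bfHh\bfB\bfHh^T)/\lambda_p(\bfRh)$. Using that $\bfA\bfC$ and $\bfC\bfA$ share their non-zero eigenvalues (as already invoked in Lemma \ref{lem:6:kShinevals}), I rewrite $\lambda_1(\bfHh\bfB\bfHh^T) = \lambda_1(\bfB\bfHh^T\bfHh)$ and apply Theorem \ref{theorem:6:lidskii} to the pair of positive semidefinite Hermitian matrices $\bfB$ and $\bfHh^T\bfHh$ in $\mathbb{R}^{N\times N}$, with $d=N$, $k=1$, $i_1=1$. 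This gives $\lambda_1(\bfB\bfHh^T\bfHh) \le \lambda_1(\bfB)\lambda_1(\bfHh^T\bfHh) = \lambda_1(\bfB)\lambda_1(\bfHh\bfHh^T)$, and dividing by $\lambda_p(\bfRh)$ yields the stated upper bound.

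For the lower bound, I would take the first term of the lower bound in Theorem \ref{cor:6:BnormHRH}, namely $\kappa(\Sh) \ge 1 + \lambda_1(\bfHh^T\bfRh^{-1}\bfHh)\lambda_N(\bfB)$, and rewrite $\lambda_1(\bfHh^T\bfRh^{-1}\bfHh) = \lambda_1(\bfRh^{-1}\bfHh\bfHh^T)$ to move to the $p\times p$ product. Since $\bfRh^{-1}$ and $\bfHh\bfHh^T$ are both positive semidefinite Hermitian in $\mathbb{R}^{p\times p}$, Theorem \ref{theorem:6:wangsum} (with $d=p$, $k=1$, $i_1=1$) can be applied in two ways, interchanging which factor plays the role of $\bfF$. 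Combined with the identity $\lambda_{p-i+1}(\bfRh^{-1}) = 1/\lambda_i(\bfRh)$ already used in the proof of Theorem \ref{cor:6:BnormHRH}, this delivers both $\lambda_1(\bfRh^{-1}\bfHh\bfHh^T) \ge \lambda_p(\bfHh\bfHh^T)/\lambda_p(\bfRh)$ and $\lambda_1(\bfRh^{-1}\bfHh\bfHh^T) \ge \lambda_1(\bfHh\bfHh^T)/\lambda_1(\bfRh)$. Multiplying through by $\lambda_N(\bfB)$ produces the two expressions inside the corollary's max.

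I do not anticipate a serious obstacle, since both directions boil down to one extra application of the matrix-product eigenvalue inequalities that are already in hand. The only care required is to pick the correct cyclic rearrangement before invoking each bound. In particular, one must not attempt to bound $\lambda_1(\bfB\bfHh^T\bfHh)$ from below via Theorem \ref{theorem:6:wangsum} at the $N\times N$ level, because the Key Assumption forces $\bfHh^T\bfHh$ to have rank at most $p<N$, so $\lambda_N(\bfHh^T\bfHh) = 0$ and the Wang--Zhang lower bound would degenerate to the trivial statement. Performing the lower-bound reduction at the $p\times p$ level, through $\bfRh^{-1}\bfHh\bfHh^T$, sidesteps this degeneracy and produces both non-trivial terms in the max.
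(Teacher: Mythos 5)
Your proposal is correct and follows essentially the same route as the paper: one further application of the Wang--Zhang inequalities (Theorems \ref{theorem:6:lidskii} and \ref{theorem:6:wangsum}) together with the cyclic-permutation identity for nonzero eigenvalues and $\lambda_{i}(\bfRh^{-1}) = 1/\lambda_{p-i+1}(\bfRh)$, applied to selected terms of Theorem \ref{cor:6:BnormHRH}. The paper additionally verifies that the remaining terms of the min and max in \eqref{eq:6:BnormHRH} reduce to the same factorised expressions (and your side remark that the $N\times N$ lower-bound route degenerates entirely is slightly overstated, since the ordering $\lambda_N(\bfB)\lambda_1(\bfHh^T\bfHh)$ survives and is what the paper uses there), but neither point affects the validity of your argument.
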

	\begin{proof}
	We begin by considering the upper bound of \eqref{eq:6:BnormHRH}. 
		By Theorem 21.10 of Harville \cite{harville97},  $\bfHh^T\bfRh^{-1}\bfHh$ has precisely the same nonzero eigenvalues as $\bfRh^{-1} \bfHh\bfHh^T$. It follows from Fact 5.11.14 of Bernstein \cite{Bernstein} that 	$\lambda_{i}(\bfRh^{-1}) = \frac{1}{\lambda_{p-i+1}(\bfRh)}$.  Applying Theorem \ref{theorem:6:lidskii} 
		for $k=1, i_1=1, d=p$  to $	\lambda_{1}(\bfRh^{-1}\bfHh\bfHh^T)$ yields:
		\begin{align}\label{eq:6:lmaxofproduct}
		\lambda_{1}(\bfRh^{-1}\bfHh\bfHh^T) \le 
		\frac{\lambda_{1}(\bfHh\bfHh^T)}{\lambda_{p}(\bfRh)}.
		\end{align}
		By Theorem 21.10 of Harville \cite{harville97}, $\bfHh\bfB\bfHh^T$ has precisely the same nonzero eigenvalues as $\bfB\bfHh^T\bfHh$. Applying Theorem \ref{theorem:6:lidskii} for $k=1, i_1=1, d=N$ yields:
		\begin{align}\label{eq:6:lmaxofproductB}
		\lambda_{1}(\bfB\bfHh^T\bfHh) \le \lambda_{1}(\bfB)\lambda_{1}(\bfHh^T\bfHh) =  \lambda_{1}(\bfB)\lambda_{1}(\bfHh\bfHh^T).
		\end{align}
		The final equality arises as the nonzero eigenvalues of $\bfHh\bfHh^T$ are equal to those of $\bfHh^T\bfHh$. 
		Therefore the two cases from Theorem \ref{cor:6:BnormHRH} yield the same `factorised' upper bound, and gives the upper bound in \eqref{eq:6:factoredprecond}.

		We now consider the first term in the lower bound of \eqref{eq:6:BnormHRH} and bound
		$\lambda_{1}(\bfHh^T\bfRh^{-1}\bfHh)$ below. We separate the contribution of $\bfRh$ and $\bfHh\bfHh^T$ using Theorem \ref{theorem:6:wangsum} for $k=1, i_1=1, d=p$. This yields
		\begin{align}
		\lambda_{1}(\bfRh^{-1} \bfHh\bfHh^T)
		\ge \max \left\{  \frac{\lambda_{1}(\bfHh\bfHh^T)}{\lambda_{1}(\bfRh)},\frac{\lambda_{p}(\bfHh\bfHh^T)}{\lambda_{p}(\bfRh)}\right\}.
		\end{align}
		Multiplying this by $\lambda_N(\bfB)$ gives 
		the two terms that appear in the lower bound of \eqref{eq:6:factoredprecond}. 
		
		We now consider the second term of \eqref{eq:6:BnormHRH} and bound $\lambda_{1}(\bfHh\bfB\bfHh^T)$ below. We separate the contribution of $\bfB$ and $\bfHh^T\bfHh$ using Theorem \ref{theorem:6:wangsum} for $k=1, i_1=1, d=N$. This yields
		\begin{equation}
		\begin{split}
		\lambda_{1}(\bfB\bfHh^T\bfHh) &\ge \max \left\{  \lambda_{1}(\bfB)\lambda_N(\bfHh^T\bfHh),  \lambda_{N}(\bfB)\lambda_1(\bfHh^T\bfHh) \right\}\\& \ge \lambda_{N}(\bfB)\lambda_1(\bfHh^T\bfHh).
		\end{split}
		\end{equation}
		The last inequality follows as $\bfHh^T\bfHh$ is not full rank and therefore $\lambda_N(\bfHh^T\bfHh) = 0$. Multiplying this result by $1/\lambda_1(\bfRh)$ gives the same value as the second term in \eqref{eq:6:factoredprecond}.

		Finally, we bound the third term of the lower bound in \eqref{eq:6:BnormHRH}. By Theorem 21.10 of Harville \cite{harville97}, $\lambda_{p}(\bfHh\bfB\bfHh^T) = \lambda_{p}(\bfB\bfHh^T\bfHh)$.  
		Applying Theorem \ref{theorem:6:wangsum} for $k=1, i_1=p, d=N$ yields
		\begin{equation}\label{eq:6:temp2}
		\begin{split}
		\lambda_{p}(\bfB\bfHh^T\bfHh) &\ge \max  \{ \lambda_{p}(\bfB)\lambda_{N}(\bfHh^T\bfHh),\lambda_{N}(\bfB)\lambda_p(\bfHh^T\bfHh)\}\\
		& \ge \lambda_{N}(\bfB)\lambda_p(\bfHh^T\bfHh).
		\end{split}
		\end{equation}
		Multiplying the second term of \eqref{eq:6:temp2} by $1/\lambda_{p}(\bfRh)$ gives the first term in \eqref{eq:6:factoredprecond}, as $\lambda_p(\bfHh^T\bfHh)= \lambda_p(\bfHh\bfHh^T)$. 
	\end{proof}
	In general it is not possible to determine which term in the lower bound of \eqref{eq:6:factoredprecond} is larger, as this will depend on the choice of $\bfB, \bfHh$ and $\bfRh$. However, we are able to comment on how the bounds are likely to be altered by changes to individual matrices. 
	As we increase $\lambda_p(\bfRh)$ both the upper bound and first term in the lower bound decrease. 
	Increasing $\lambda_{1}(\bfRh)$ will lead to a decrease in the second term of the lower bound.
	As $\lambda_{N}(\bfB)$ increases, the lower bound will increase but the upper bound will remain unchanged. 
	Increasing $\lambda_{1}(\bfB)$ will yield a larger upper bound and has no effect on the lower bound. 
	Larger values of $\lambda_{p}(\bfHh\bfHh^T)$ will lead to increases to the first term in the lower bound and larger values of $\lambda_{1}(\bfHh\bfHh^T)$ will lead to increases of the upper bound and second term of the lower bound.
	In the experiments in Section \ref{sec:6:Numerics} we will study how each of these terms change with interacting parameters, and assess which lower bound is tighter for a variety of situations.

	\subsection{Bounds on the condition number in the case of circulant error covariance matrices}	\label{sec:6:circulant}
	The theoretical bounds presented in Section \ref{sec:6:genbounds} apply for any choice of observation and background error covariance matrices. However, for a given numerical framework, general bounds can typically be improved by exploiting specific structure of the matrices being used \cite{haben11c}. In this section we will show that under additional assumptions on the structure of the error covariance matrices and observation operator, the bounds given by \eqref{eq:6:habenbound} yield the exact value of $\kappa(\widehat{\bfS})$.

	We begin by defining circulant matrices. Circulant matrices are a natural choice for spatial correlation matrices on a one-dimensional periodic domain, as they yield correlation matrices that are homogeneous and isotropic \cite{haben11c}. We will make use of this structure in the numerical experiments presented in Section \ref{sec:6:Numerics}. \linebreak

	\begin{mydef}\cite{davis79}\label{def:6:Circulant}
		A circulant matrix $\bfC \in \mathbb{R}^{N\times N}$ is a matrix of the form 
		\begin{center}
			\begin{math}
			\bfC =
			\begin{pmatrix}
			c_0 & c_1 & c_2 & \cdots & c_{N-2} & c_{N-1} \\
			c_{N-1} & c_0 & c_1 & \cdots & c_{N-3} & c_{N-2} \\
			c_{N-2} & c_{N-1} & c_0 & \cdots &c_{N-4} & c_{N-3} \\
			\vdots& \vdots & \vdots & \ddots &\vdots &\vdots \\
			c_2& c_3 & c_4 & \cdots & c_0 & c_1 \\
			c_1 & c_2 & c_3 & \cdots & c_{N-1} & c_0
			\end{pmatrix}.
			\end{math}
		\end{center}
		
	\end{mydef}
	One computationally beneficial property of circulant matrices is that their eigenvalues  can  be calculated directly via a discrete Fourier transform. 	As we shall see in Theorem \ref{thm:6:CirculantEvalsvecs}, any circulant matrix of dimension $N$ admits the same eigenvectors. 
	
	\begin{theorem} \label{thm:6:CirculantEvalsvecs}
		The eigenvalues of a circulant matrix $\bfC\in \mathbb{R}^{N\times N}$, as given by Definition~\ref{def:6:Circulant}, are given by
		\begin{equation} \label{eq:6:CirculantEvals}
		\gamma_m=\sum_{k=0}^{N-1} c_k \omega^{mk},
		\end{equation} 
		with corresponding eigenvectors
		\begin{equation}\label{eq:6:CirculantEvecs}
		\mathbf{v}_{m}=\frac{1}{\sqrt{N}}(1,\omega^m, \cdots, \omega^{m(N-1)}),
		\end{equation}
		where $\omega=e^{-2\pi i/N}$ is an $N-$th root of unity.
	\end{theorem}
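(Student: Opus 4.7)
The plan is to verify the eigenpair relation $\bfC \mathbf{v}_m = \gamma_m \mathbf{v}_m$ directly by computing both sides entry by entry, and then argue that this accounts for all $N$ eigenvalues (with multiplicity). The key observation is that the $(j,k)$ entry of $\bfC$, read off the matrix in Definition~\ref{def:6:Circulant}, has the shift structure $\bfC_{jk} = c_{(k-j) \bmod N}$, so the computation reduces to reindexing a cyclic sum.

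First I would fix $m \in \{0,1,\dots,N-1\}$ and compute the $j$th component of $\bfC \mathbf{v}_m$:
\begin{equation}
(\bfC \mathbf{v}_m)_j = \frac{1}{\sqrt{N}} \sum_{k=0}^{N-1} c_{(k-j)\bmod N}\, \omega^{mk}.
\end{equation}
I would substitute $\ell = (k-j) \bmod N$, so that $k \equiv \ell + j \pmod{N}$, and use the crucial identity $\omega^{N} = 1$ (so $\omega^{m(\ell+j)}=\omega^{m\ell}\omega^{mj}$ irrespective of reduction modulo $N$). This yields
\begin{equation}
(\bfC \mathbf{v}_m)_j = \frac{\omega^{mj}}{\sqrt{N}} \sum_{\ell=0}^{N-1} c_\ell\, \omega^{m\ell} = \gamma_m\, (\mathbf{v}_m)_j,
\end{equation}
establishing $\bfC \mathbf{v}_m = \gamma_m \mathbf{v}_m$ componentwise.

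Finally, I would confirm that the $\mathbf{v}_m$ for $m=0,\dots,N-1$ exhaust the spectrum. Since the $\mathbf{v}_m$ are (up to the $1/\sqrt{N}$ scaling) the columns of the discrete Fourier matrix, they form an orthonormal basis of $\mathbb{C}^N$; in particular they are linearly independent, so the $N$ pairs $(\gamma_m,\mathbf{v}_m)$ constitute a complete eigendecomposition of $\bfC$.

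I do not expect any serious obstacle here: the only subtlety is the bookkeeping in the cyclic index shift, which is handled cleanly by invoking $\omega^{N}=1$. No structural assumption on $\bfC$ beyond its circulant form is needed, and the eigenvectors are independent of the specific entries $c_0,\dots,c_{N-1}$, a fact that will be exploited in Section~\ref{sec:6:circulant} when simultaneous diagonalisation of several circulant matrices is used.
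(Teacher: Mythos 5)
Your proof is correct: the identification $\bfC_{jk}=c_{(k-j)\bmod N}$ matches Definition~\ref{def:6:Circulant}, the index shift using $\omega^N=1$ is handled properly, and the linear independence of the Fourier vectors does account for the full spectrum. The paper itself gives no proof, deferring entirely to the cited reference \cite{gray06}, which carries out essentially this same direct componentwise verification, so your argument coincides with the standard one the paper relies on.
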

	\begin{proof}
		See \cite{gray06} for full derivation.
	\end{proof}

	Our numerical experiments in Section \ref{sec:6:Numerics} will use circulant background and observation error covariance matrices. 
	When both $\bfHh\bfB\bfHh^T$ and $\bfRh$ are circulant, with some additional assumptions on the entries of matrix products, we can prove that the upper and lower bounds given by Theorem \ref{thm:6:habenbound} are equal and yield the exact value of $\kappa(\widehat{\bfS})$.
	\begin{cor} \label{lem:6:circulantequivbound}
		If $\bfHh\bfB\bfHh^T\in \mathbb{R}^{p\times p}$ and $\bfRh\in\mathbb{R}^{p\times p}$ are circulant matrices, and all of the entries of $\bfRh^{-1/2}\bfHh\bfB\bfHh^T\bfRh^{-1/2}$  are positive, then the upper and lower bounds in Theorem \ref{thm:6:habenbound} are equal, and the bound on $\kappa(\widehat{\bfS})$ is exact.
	\end{cor}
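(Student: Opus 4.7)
The plan is to show that, under the stated hypotheses, the upper and lower bounds of Theorem \ref{thm:6:habenbound} collapse to a single value. Writing $\bfA = \bfRh^{-1/2}\bfHh\bfB\bfHh^T\bfRh^{-1/2}$, the lower bound reads $1 + \frac{1}{p}\sum_{i,j=1}^p \bfA_{ij}$ and the upper bound reads $1 + \|\bfA\|_\infty = 1 + \max_i \sum_{j=1}^p |\bfA_{ij}|$, so it is enough to prove that $\frac{1}{p}\sum_{i,j} \bfA_{ij} = \max_i \sum_j |\bfA_{ij}|$ and to conclude that $\kappa(\widehat{\bfS})$ must agree with both.

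First I would establish that $\bfA$ is itself circulant. By hypothesis $\bfRh$ and $\bfHh\bfB\bfHh^T$ are circulant, and Theorem \ref{thm:6:CirculantEvalsvecs} shows that every $p \times p$ circulant matrix is diagonalised by the common orthonormal basis $\{\mathbf{v}_m\}_{m=0}^{p-1}$ of \eqref{eq:6:CirculantEvecs}. Writing $\bfRh = \bfF^{-1}\bfD_R\bfF$ for the unitary DFT matrix $\bfF$ and the diagonal matrix $\bfD_R$ of eigenvalues of $\bfRh$, one has $\bfRh^{-1/2} = \bfF^{-1}\bfD_R^{-1/2}\bfF$, which is again diagonalised by $\bfF$ and hence circulant. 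Since products of same-dimension circulant matrices are themselves circulant (they commute and are simultaneously diagonalised by $\bfF$), $\bfA$ is circulant.

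Next I would exploit the cyclic-row structure together with the positivity assumption. For a circulant matrix every row is a cyclic permutation of the first, so each row sum equals a common value $s := \sum_{j=1}^p \bfA_{1j}$, giving $\sum_{i,j=1}^p \bfA_{ij} = p\,s$ and hence $\frac{1}{p}\sum_{i,j}\bfA_{ij} = s$. The positivity hypothesis yields $|\bfA_{ij}| = \bfA_{ij}$ for all $i,j$, so the absolute row sum of each row also equals $s$ and therefore $\|\bfA\|_\infty = s$. Substituting into Theorem \ref{thm:6:habenbound} gives $1+s \le \kappa(\widehat{\bfS}) \le 1+s$, so $\kappa(\widehat{\bfS}) = 1 + s$ exactly.

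The only genuinely delicate point is the verification that $\bfRh^{-1/2}$ remains circulant; once that is in hand, the rest is an immediate counting argument on rows. The role of the positivity assumption should be emphasised: without it one would only obtain $\frac{1}{p}\sum_{i,j}\bfA_{ij} \le \|\bfA\|_\infty$ from the triangle inequality, with possible strict inequality arising from cancellations among the entries of each row, and so the two bounds of Theorem \ref{thm:6:habenbound} would no longer coincide.
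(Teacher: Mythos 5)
Your proof is correct and takes essentially the same route as the paper's: establish that $\bfRh^{-1/2}\bfHh\bfB\bfHh^T\bfRh^{-1/2}$ is circulant, then use the equality of all row sums together with the positivity hypothesis to identify the average row sum (lower bound) with the maximum absolute row sum (upper bound). The only cosmetic difference is that you verify the circulant structure of $\bfRh^{-1/2}$ explicitly via the DFT diagonalisation, whereas the paper simply cites the standard facts that products, inverses and square roots of circulant matrices are circulant.
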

	\begin{proof}
		The product of circulant matrices is a circulant matrix, the inverse of a circulant matrix is circulant \cite{gray06}, and the square root of a circulant matrix is also circulant \cite{mei12}. Therefore if the product $\bfHh\bfB\bfHh^T$ is circulant then the product $\bfRh^{-1/2}\bfHh\bfB\bfHh^T\bfRh^{-1/2}$ is circulant, as $\bfRh$ is circulant by assumption of the corollary.
		
		The lower bound of \eqref{eq:6:habenbound} computes the average row sum of the product $\bfRh^{-1/2}\bfHh\bfB\bfHh^T\bfRh^{-1/2}$. 
			As the product is circulant, each row has the same sum, given by $\sum_{k=0}^{p-1} c_k$, where $c_i$ is the ith entry of the first row of the circulant matrix (as introduced in Definition \ref{def:6:Circulant}).
		
		The upper bound of \eqref{eq:6:habenbound} returns the maximum absolute row sum of the product. As the product is circulant with only positive entries, all absolute row sums are identically equal to  $\sum_{k=0}^{p-1} |c_k| = \sum_{k=0}^{p-1} c_k $. Hence, we have equality of lower and upper bounds and hence the exact value for $\kappa(\widehat{\bfS})$.
	\end{proof} 
	This result shows that, if the additional assumptions are satisfied, we can compute $\kappa(\widehat{\bfS})$ directly using  \eqref{eq:6:habenbound}. 
	{\color{black}If $\bfB$ and $\bfRh$ are both circulant, and the observed state variables are regularly spaced then the first assumption of Corollary~\ref{lem:6:circulantequivbound} is satisfied. The requirement that all entries of $\bfRh^{-1/2}\bfHh\bfB\bfHh^T\bfRh^{-1/2}$ are positive is less straightforward to guarantee a priori, and depends on the specific structure of the three matrices being considered. In particular, even if all of the entries of $\bfRh, \bfHh$ and $\bfB$ are positive, entries of the product $\bfRh^{-1/2}\bfHh\bfB\bfHh^T\bfRh^{-1/2}$ can still be negative. The result of Corollary \ref{lem:6:circulantequivbound} will be used in the numerical experiments in the next section to compare the performance of the new bounds given by \eqref{eq:6:factoredprecond} and the existing bounds given by Theorem \ref{thm:6:habenbound}.}

	\section{Numerical framework}\label{sec:NumFramework}
	In this section we  describe  the numerical framework that will be used to study how the bounds on the preconditioned Hessian  \eqref{eq:6:Sh} compare with the actual value of $\kappa(\Sh)$. {\color{black}Although the bounds that were developed in Section \ref{sec:6:PrecondTheory} were developed for the 4D-Var problem, the numerical experiments presented in Section \ref{sec:6:Numerics} will be conducted for a 3D-Var problem. This allows us to use the framework that was introduced in Tabeart et al. \cite{tabeart17a}, and directly compare the preconditioned and unpreconditioned formulations in the same numerical setting.}
	We note that in the case of 3D-Var, $\bfRh$ and $\bfHh$ simplify to the standard observation error covariance matrix $\bfR$ and observation operator $\bfH$, respectively in \eqref{eq:6:PrecondCostFn}, \eqref{eq:6:Sh} and all the bounds in Section \ref{sec:6:PrecondTheory}. The Hessian that is used for the experiments in this section is therefore given by $\Sh = \bfI_N + \bfB^{1/2}\bfH^T\bfR^{-1}\bfH\bfB^{1/2}$.

	We now define the different components of the numerical framework. Our domain is the unit circle, and we fix the ratio of the number of state variables to observations as $N=2p$, i.e. twice as many state variables as observations. 	
	Similarly to Tabeart et al. \cite{tabeart17a} we define both the observation and background error covariance matrices to have a circulant structure with unit variances. Circulant matrices are a natural choice for correlations on a periodic domain with evenly distributed state variables. They also admit useful theoretical properties as was discussed in Section \ref{sec:6:circulant}. 
	The use of circulant error covariance matrices allow us to better understand the interaction between different terms in the Hessian, and to isolate the impact of parameter changes.

	The experiments presented in this paper will use circulant matrices arising from the second order autoregressive (SOAR)  
	correlation function \cite{Daley92,johnson03}.  SOAR matrices are used in NWP applications as a horizontal correlation function \cite{simonin18}
	and are fully defined by a correlation lengthscale for a given domain. We remark that we substitute the great circle distance in the SOAR correlation function with the chordal distance \cite{gaspari99,jeong15} to ensure that the properties of positive definiteness are satisfied and that we obtain a valid correlation matrix. 
	
	\begin{mydef}\label{def:6:SOARMatrix}
		The SOAR error correlation matrix on the unit circle is given by 
		\begin{equation} \label{eq:6:SOAReq}
		\bfD(i,j) = \Bigg(1+ \frac{\Big|2 \sin\Big(\frac{\theta_{i,j}}{2}\Big)\Big|}{L}\Bigg) \exp \Bigg(\frac{-\Big|2 \sin\Big(\frac{\theta_{i,j}}{2}\Big)\Big|}{L}\Bigg),
		\end{equation}
		where $L>0$ is the correlation lengthscale and $\theta_{i,j}$  denotes the angle between grid points $i$ and $j$. The chordal distance between adjacent grid points is given by
		\begin{equation}\label{eq:6:Deltax}
		\Delta x = 2 \sin\Big(\frac{\theta}{2}\Big) = 2 \sin\Big(\frac{\pi}{N}\Big),
		\end{equation} 
		where $N$ is the number of gridpoints and $\theta = \frac{2\pi}{N} $ is the angle between adjacent gridpoints.
	\end{mydef}

	Both the background and observation error covariance matrices for the experiments presented in Section \ref{sec:6:Numerics} will be SOAR with constant unit variance. We will denote their respective lengthscales by $L_B$ and $L_R$.

	We now introduce the observation operators that will be used for the 3D-Var experiments. Three of our observation operators are the same as those used in Tabeart et al. \cite{tabeart17a} which we state again for clarity. 
	
	\begin{mydef}\label{def:6:obsops}
		The observation operators $\bfH_1$, $\bfH_2$, $\bfH_3 \in \mathbb{R}^{p \times N}$, for $N=2p$, are defined as follows:
		\begin{align} 
		\bfH_1 (i,j)&=
		\begin{cases}
		1, & j=i \text{ for } i=1, \dots , p \\
		0, & \text{otherwise}.
		\end{cases}\\ 
		\bfH_2 (i,j)&=
		\begin{cases}
		1, & j=2i \text{ for } i=1, \dots , p \\
		0, & \text{otherwise}.
		\end{cases}\\
		\bfH_3 (i,j)&=
		\begin{cases}
		\frac{1}{5}, & j\in\{2i-2, 2i-1, 2i, 2i+1 ,2i+2 \pmod{N} \} \text{ for } i=1, \dots , p \\
		0, & \text{otherwise}.
		\end{cases}
		\end{align}
		
	\end{mydef}
	The first choice of observation operator, $\bfH_1$ corresponds to direct observations of the first half of the domain. The second observation operator, $\bfH_2$, corresponds to direct observations of alternate state variables. The third observation operator, $\bfH_3$, is a smoothed version of $\bfH_2$. Observations of alternate state variables are smoothed equally over 5 adjacent state variables. 
	The fourth choice of observation operator, $\bfH_4$ selects $p$ random direct observations. We considered a number of choices of random observation operator, and all choices yielded similar numerical results. In order to ensure a fair comparison, we fix the same choice of $\bfH_4$ for all of the results presented in Section~\ref{sec:6:Numerics}. This choice of observation operator is shown in Figure \ref{fig:6:H4}. Observations are spread over the whole domain, but are clustered rather than evenly distributed. Figure 2 of Tabeart et al. \cite{tabeart17a} shows a representation of a low dimensional version of the observation operator structure for $\bfH_1$, $\bfH_2$ and $\bfH_3$. For the numerical experiments in Section \ref{sec:6:Numerics}, we will use $p=100$ observations and $N=200$ state variables. In the unpreconditioned case, structure in the observation operator, such as regularly spaced observations, was important for the tightness of bounds and convergence of a conjugate gradient method \cite{tabeart17a}.  We therefore consider $\bfH_4$ as an operator without strict structure. This will allow us to see how structure (or the lack of it) affects the preconditioned problem. 
	
	\begin{figure}
		\includegraphics[width=\textwidth,clip,trim=0mm 7mm 0mm 0mm]{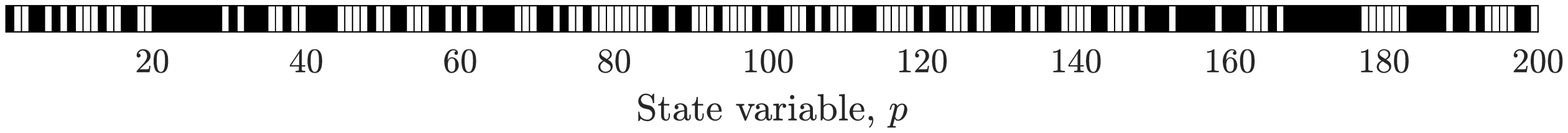}
		\caption[Representation of observation operator $\bfH_4$]{Representation of state variables that are observed for $\bfH_4$. Black denotes state variables that are observed directly and white denotes state variables that are not observed. \label{fig:6:H4}}
	\end{figure}

	\subsection{Changes to the condition number of the Hessian}\label{sec:6:condnodesign}
	Our first set of experiments consider how different combinations of parameters will alter the value of $\kappa(\Sh)$ and the bounds given by \eqref{eq:6:factoredprecond}. 
	We compute the condition number of the Hessian \eqref{eq:6:Sh} using the Matlab 2018b function $cond$ \cite{MATLAB:2018} and compare against the values given by our bounds. Table 1 (reproduced from Tabeart et al. 2018 \cite{tabeart17a}) shows that increasing the lengthscale of a SOAR correlation matrix will reduce its smallest eigenvalue and increase its largest eigenvalue.   The maximum and minimum eigenvalues of both error covariance matrices appear  in \eqref{eq:6:factoredprecond}. We can therefore predict how the bounds will change with varying parameter values.  	
	
	\begin{table}
		\caption{Reproduction of Table 1 from \cite{tabeart17a}. Summary of {\color{black} changes to the} eigenvalues of $\bfB\in\mathbb{R}^{200\times200}$ and $\bfR\in\mathbb{R}^{100\times100}$  with the lengthscales $L_B$ and $L_R$ for $\bfB$ and $\bfR$ both SOAR matrices.}
		\centering
		\label{table:2}
		\begin{tabular}{ l c c c c c}
			& \multicolumn{3}{c}{Lengthscale $L_R \ $ or $\ L_B$} \\
			
			& $0.1$ & $0.33 $ & $0.66$ & $0.99 $ &$1$ \\
			
			$\lambda_{N}(\bfR)$& $1.92\times 10^{-2}$&$ 5.74\times 10^{-4}$ & $7.21 \times10^{-5}$ & $2.14\times10^{-5}$ &$2.08 \times10^{-5} $\\
			$\lambda_{1}(\bfR)$& $6.40\times 10^{0}$&$ 2.26\times 10^{1}$ & $4.67 \times10^{1}$ & $6.36\times10^{1}$ &$6.40 \times10^{1} $\\
			$\lambda_{N}(\bfB)$&$2.54\times 10^{-3}$& $7.19\times10^{-5}$ & $8.99\times 10^{-6}$& $2.67\times10^{-6}$&$2.59\times 10^{-6}$\\
			$\lambda_{1}(\bfB)$&$1.28\times 10^1$&$4.51\times10^{1}$ & $9.35\times10^{1} $& $1.27\times10^{2}$ &$1.28\times 10^2$\\

		\end{tabular}
	\end{table}
	
	\begin{itemize}
		\item As $L_R$ (the lengthscale of the correlation function used to construct $\bfR$) increases, $\lambda_{p}(\bfR)$ decreases. This means that both the upper bound  and the first term in the lower bound of \eqref{eq:6:factoredprecond}  will increase. However, $\lambda_{1}(\bfR)$ increases with $L_R$ meaning that the second term in the lower bound will decrease. It is therefore not possible to determine whether the lower bound will increase or decrease with increasing $L_R$ in general. \item For the case of direct observations, all eigenvalues of $\bfH\bfH^T$ are equal to unity \cite{haben11c}. Therefore the first term in the lower bound of  \eqref{eq:6:factoredprecond} will always be greater than the second term. Both $\bfH_1$ and $\bfH_2$ correspond to direct observations; hence for these choices of observation operator the first term in the lower bound of \eqref{eq:6:factoredprecond} is the lower bound of $\kappa(\Sh)$.
		\item As $L_B$ (the lengthscale of the correlation function used to construct $\bfB$) increases, $\lambda_{1}(\bfB)$ increases. This means that the upper bound of \eqref{eq:6:factoredprecond} will increase with $L_B$. As $L_B$ increases, $\lambda_{N}(\bfB)$ decreases. This means that both terms in the  lower bound of \eqref{eq:6:factoredprecond} will decrease with increasing $L_B$.  Hence, the bounds \eqref{eq:6:factoredprecond} will diverge as $L_B$ increases.
	\end{itemize}
	
	We wish to assess whether the qualitative behaviour of $\kappa(\Sh)$ agrees with the qualitative behaviour of the bounds for our experimental framework. Additionally, we are interested in determining which term in the lower bound of \eqref{eq:6:factoredprecond} is largest, and whether this depends on the choice of $\bfB$, $\bfR$ and $\bfH$.
	
	In Section \ref{sec:6:Numerics} we compare the bounds given by \eqref{eq:6:factoredprecond} with those of \eqref{eq:6:habenbound}. As discussed at the end of Section \ref{sec:6:Background}, although we expect the bounds given by \eqref{eq:6:habenbound} to be tighter in many cases, separating the contribution of constituent matrices by using \eqref{eq:6:factoredprecond} will be qualitatively informative. 
	\subsection{Convergence of a conjugate gradient algorithm}\label{sec:6:CGdesign}
	Although conditioning of a problem is often used as a proxy to study convergence, there are well-known situations where the condition number provides a pessimistic indication of convergence speed{\color{black}, notably in the case of repeated or clustered eigenvalues (e.g Theorems 38.3, 38.5 of Trefethen and Bau \cite{TrefethenLloydN.LloydNicholas1997Nla}, and Theorem 38.4 of Gill, Murray and Wright \cite{Gill86}). We therefore wish to investigate how well the condition number of the preconditioned system reflects the convergence of a conjugate gradient method for our experimental framework. }
	Following a similar method to Section 5.3.2. of Tabeart et al. \cite{tabeart17a} we study how the speed of convergence of a conjugate gradient method applied to the linear system $\Sh\bfx = \mathbf{b}$ changes with the parameters of the system. We define $\bfx$ as a vector with features at a variety of scales, and then calculate $ \mathbf{b}=\Sh\bfx$ before recovering $\bfx$. We use the Matlab 2018b routine $pcg.m$ to recover $\bfx$ using the conjugate gradient method. As we are studying a preconditioned system, convergence is fast. In order to make the differences between parameter choices more evident we use a tolerance of $1\times 10^{-10}$ on the relative residual.  In Section \ref{sec:6:Numerics} we show results for one particular realisation of $\bfx$ to enable a fair comparison between different choices of $\bfR$, $\bfB$ and $\bfH$.
	A number of other values of $\bfx$ were tested, with similar results.
	
	We consider how changes to lengthscale and observation operator alter the convergence of the conjugate gradient method. For cases where convergence behaves differently to conditioning, we  study the spectrum of $\Sh$ to understand why these differences occur.

	\section{3D-Var experiments}\label{sec:6:Numerics}
	In this section we present the results of our numerical experiments. 
	Figures will be plotted as a function of  changes to correlation lengthscales for both $\bfB$ and $\bfR$. We recall that increasing the lengthscale of a SOAR correlation matrix will reduce its smallest eigenvalue and increase its largest eigenvalue \cite{waller16,tabeart17a}.  
	
	\begin{figure}
		\includegraphics[trim=15mm 0mm 15mm 0mm, clip, width=\textwidth]{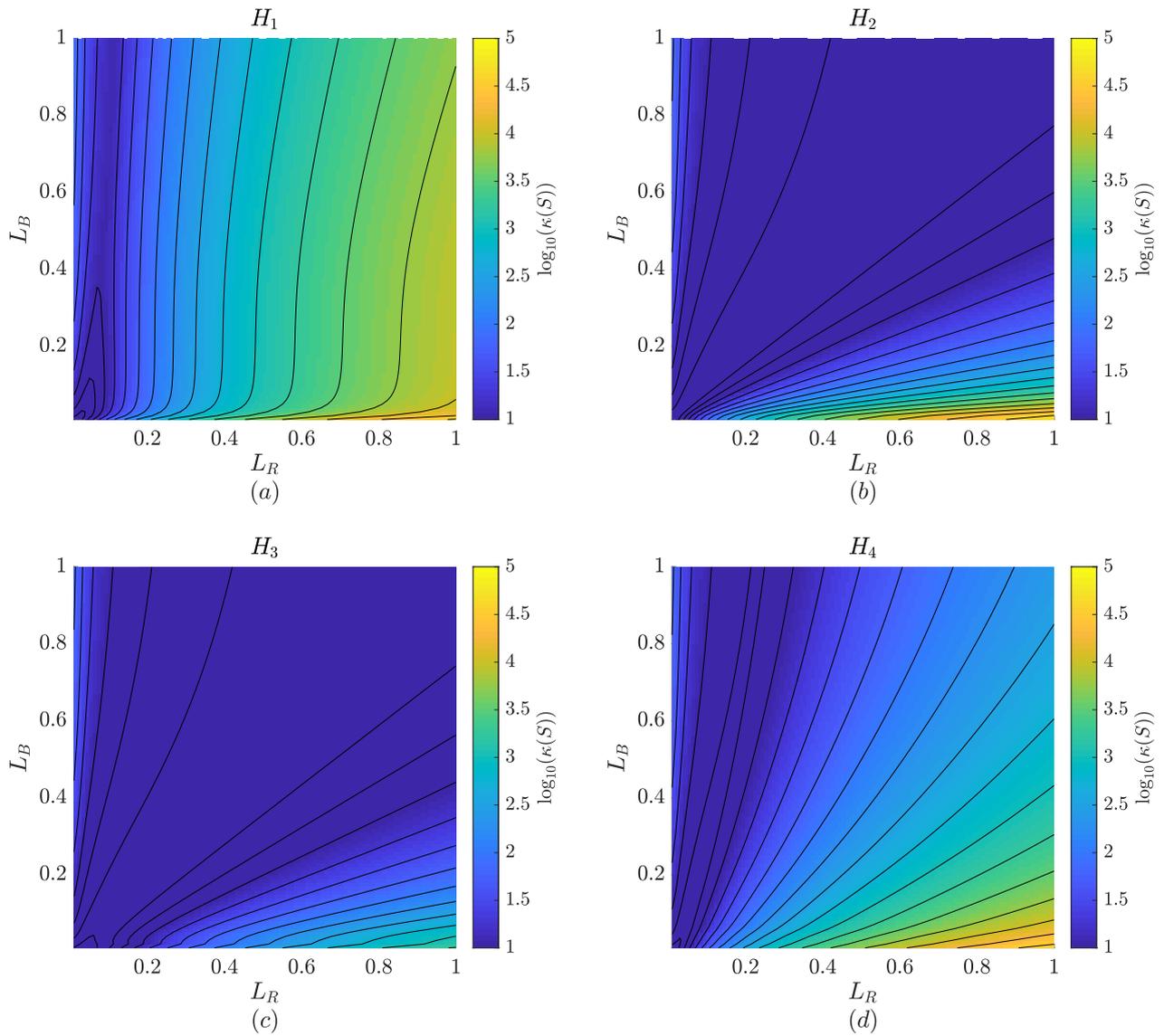}
		\caption[Changes to $\kappa(\Sh)$ with $L_R$, $L_B$ and $\bfH$]{Change to $\kappa(\Sh)$  with changes in $L_R$, $L_B$ for (a) $\bfH_1$, (b) $\bfH_2$, (c) $\bfH_3$ and (d) $\bfH_4$. The colour map is shown on a logarithmic scale which is standardised for all figures. Contours range from $\log_{10}(\kappa(\Sh)) =0.25$ to $\log_{10}(\kappa(\Sh))=5$ with a contour interval of $0.25$.\label{fig:6:heatmap}}
	\end{figure}

	Figure \ref{fig:6:heatmap} shows how the condition number of the preconditioned Hessian \eqref{eq:6:Sh} changes with the lengthscales of $\bfB$ and $\bfR$ for different choices of $\bfH$. 
	For $\bfH_1$,  increasing $L_R$ increases the value of $\kappa(\Sh)$. Changes with $L_B$ are much smaller, but increases to $L_B$ lead to a slight decrease in $\kappa(\Sh)$.
	For both $\bfH_2$ and $\bfH_3$, large values of $\kappa(\Sh)$ occur for very  large values of $L_R$ and small values of $L_B$. For a fixed value of $L_R$, increasing $L_B$ results in a rapid decrease in the value of $\kappa(\Sh)$. {For small fixed values of $L_R$ ($L_R<0.1$), this decrease is followed by a slow increase to $\kappa(\Sh)$  with increasing $L_B$.} The minimum value of $\kappa(\Sh)$ occurs when $L_R=L_B$; in this case $\bfH\bfB\bfH^T=\bfR$ to machine precision for both $\bfH_2$ and $\bfH_3$.  
	The qualitative behaviour for $\bfH_2$ and $\bfH_3$ is very similar, with smaller values of $\kappa(\Sh)$ for $\bfH_3$ than $\bfH_2$.  
	This is also the case in the unpreconditioned setting \cite{tabeart17a}, and occurs as $\bfH_3$ can be considered as a smoothed version of $\bfH_2$. 
	Qualitatively the behaviour for $\bfH_4$ is a compromise between $\bfH_1$ and $\bfH_2$; we can reduce $\kappa(\Sh)$ by increasing $L_B$ or decreasing $L_R$. 
	In the unpreconditioned case decreasing either lengthscale always reduces $\kappa(\Sh)$. However, in the preconditioned setting the ratio between background and observation lengthscales is important, meaning that for some cases increasing $L_B$ or $L_R$ will reduce $\kappa(\Sh)$.

	\begin{landscape}
		\begin{figure}
			\centering
			\includegraphics[trim=25mm 8mm 25mm 12mm, clip, width=1.3\textwidth]{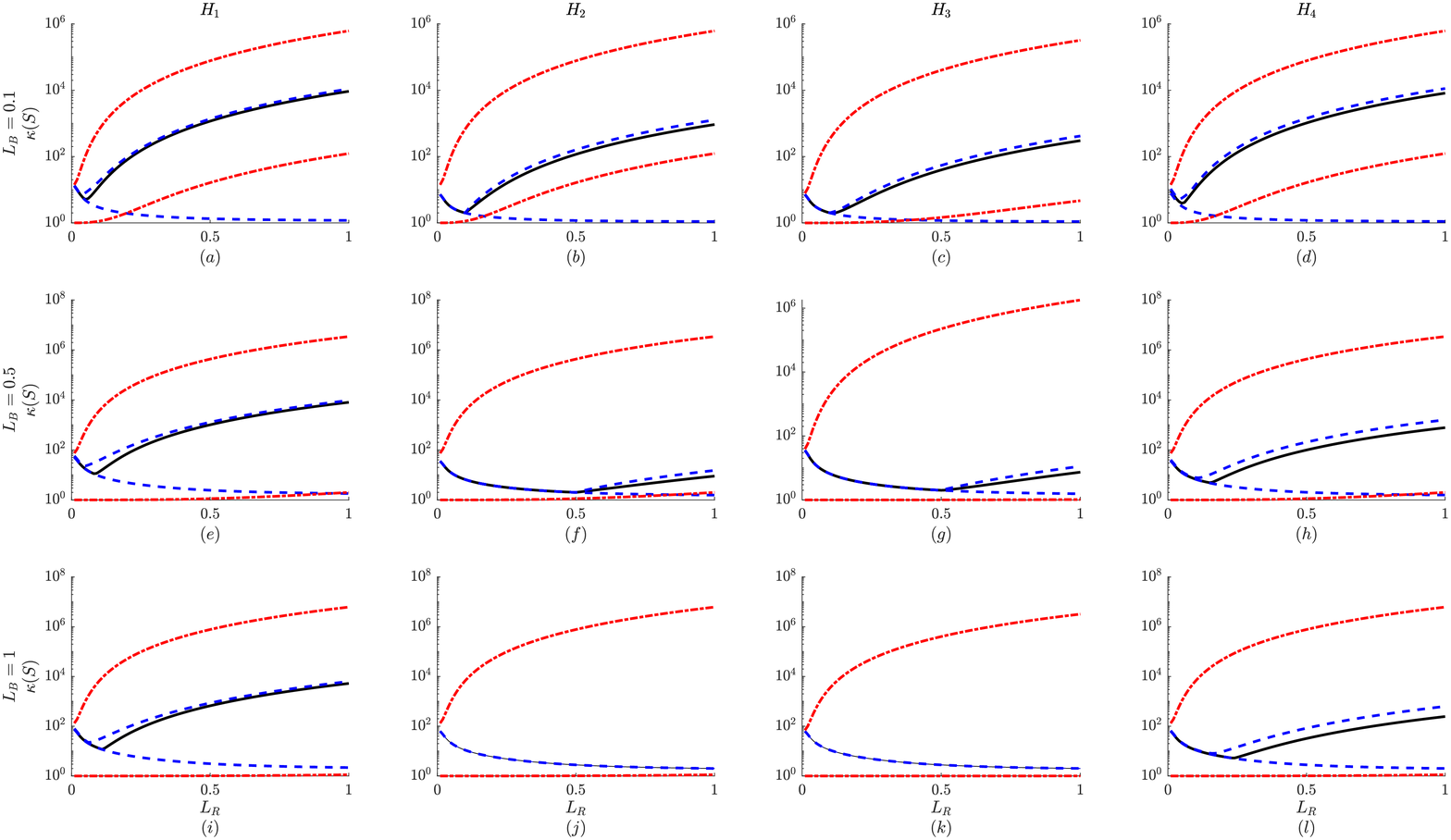}
			\caption[Changes to bounds on $\kappa(\Sh)$ with $L_R$, $L_B$ and $\bfH$]{Bounds and value of $\kappa(\Sh)$ for  (a,e,i) $\bfH_1$, (b,f,j) $\bfH_2$, (c,g,k) $\bfH_3$ and (d,h,l) $\bfH_4$ as a function of $L_R$. Blue dashed lines denote the bounds given by \eqref{eq:6:habenbound}, red dot-dashed lines denote the upper bound and first term in the lower bound of \eqref{eq:6:factoredprecond}. The solid black line denotes the value of $\kappa(\Sh)$ calculated using the $cond$ command in Matlab 2018b \cite{MATLAB:2018}. The different rows correspond to different values of $L_B$.  For (j) and (k) the upper and lower bounds of \eqref{eq:6:habenbound} are equal to $\kappa(\bfS)$ for all values of $L_R$ by the result of Corollary \ref{cor:6:BnormHRH}, and hence appear as a single line. \label{fig:6:Hbounds}}
		\end{figure}
	\end{landscape}
	
Figure \ref{fig:6:Hbounds} shows the value of $\kappa(\Sh)$, terms in the bounds \eqref{eq:6:factoredprecond},  and the bounds \eqref{eq:6:habenbound} for various combinations of $\bfH$, $\bfR$ and $\bfB$. The second term in the lower bound \eqref{eq:6:factoredprecond}, given by 
	$1+ {\lambda_{1}(\bfH\bfH^T)\lambda_N(\bfB)}({\lambda_1(\bfR)})^{-1}$,
	is not shown, as it performs worse than the first term of \eqref{eq:6:factoredprecond}, given by 
	$	1+ \lambda_{p}(\bfH\bfH^T)\lambda_N(\bfB)({\lambda_p(\bfR)})^{-1}$,
	for all parameter combinations studied.  
	Both the upper and lower bounds of \eqref{eq:6:factoredprecond} increase with $L_R$.  They represent the increase in $\kappa(\Sh)$  which occurs for $L_R\ge L_B$ for $\bfH_2$ and $\bfH_3$ and for larger values of $L_R$ for $\bfH_1$ and $\bfH_4$. The initial decrease of $\kappa(\Sh)$ with increasing $L_R$ is not represented by the bounds of \eqref{eq:6:factoredprecond}. Although some of the qualitative behaviour is well represented, the bounds are very wide.  Notably for larger values of $L_B$ the lower bound given by \eqref{eq:6:factoredprecond} is very close to $1$ for all values of $L_R$. In contrast, the bounds given by \eqref{eq:6:habenbound} represent the initial decrease in $\kappa(\Sh)$ for small values of $L_R$ well, both qualitatively and quantitatively. The upper bound of \eqref{eq:6:habenbound} then increases with increasing $L_R$ and remains tight for all parameter combinations. The lower bound of \eqref{eq:6:habenbound} is monotonically decreasing, and hence does not represent the behaviour  of $\kappa(\Sh)$ well for larger values of $L_B$ and $L_R$. We note that for $\bfH_2$ and $\bfH_3$ the upper and lower bounds of \eqref{eq:6:habenbound} are equal for $L_B>L_R$. This results from Corollary \ref{lem:6:circulantequivbound} as 
	$\bfH\bfB\bfH^T$ is circulant when $\bfH= \bfH_2$ or $ \bfH=\bfH_3$ and all entries in the product $\bfR^{-1/2}\bfH\bfB\bfH^T\bfR^{-1/2}$ are  positive for $L_B\ge L_R$. For panels (j) and (k) this means that the bounds given by \eqref{eq:6:habenbound} are equal to $\kappa(\Sh)$ for all plotted values of $L_R$
	
	Comparing the bounds given by \eqref{eq:6:factoredprecond} and \eqref{eq:6:habenbound}, we find that the upper bound of \eqref{eq:6:habenbound} performs better for all parameters studied. The best lower bound depends on the choice of $L_B$ and $L_R$: for lower values of $L_B$ and larger values of $L_R$  the first term of \eqref{eq:6:factoredprecond} is the tightest. Otherwise the bound given by \eqref{eq:6:habenbound} yields the tightest bound in this setting. 
	Although the bounds given by \eqref{eq:6:habenbound} represent the behaviour of $\kappa(\widehat{\bfS})$ well, we note that the numerical framework considered here has a very specific structure that is unlikely to occur in practice. Observation operators are likely to be much less smooth and have less regular structure: e.g. observations may not occur at the location of state variables, observation and state variables may not be evenly spaced, data may be missing, leading to different observation networks at different times or time windows. This may make a difference to the performance of both sets of bounds.  
	
	\begin{figure}
		\centering
		
		\includegraphics[trim = 20mm 0mm 18mm 0mm, clip, width=0.95\textwidth]{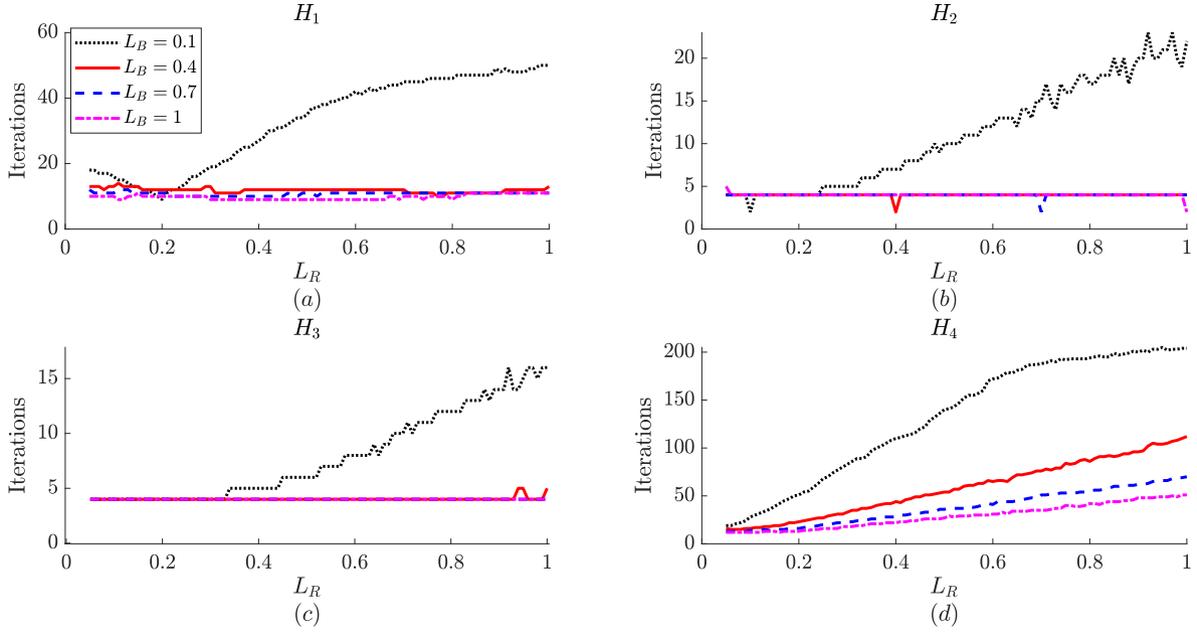} 
		
		\caption[Changes to convergence of conjugate gradient with $L_B$, $L_R$ and $\bfH$]{Number of iterations required for a conjugate gradient method to converge for changing values of $L_R$ and $L_B$ for (a) $\bfH_1$, (b) $\bfH_2$, (c) $\bfH_3$ and (d) $\bfH_4$. {\color{black} Note the difference in the y-axis values for each of the subplots.}} 
		\label{fig:6:CGconvergence} 
	\end{figure}

	We now consider how altering the data assimilation system affects the convergence of a conjugate gradient method for the problem introduced in Section \ref{sec:6:CGdesign}. 	Figure \ref{fig:6:CGconvergence} shows how convergence of the conjugate gradient problem changes with $L_B$, $L_R$ and $\bfH$. For all choices of $\bfH$ the largest number of iterations occurs when $L_R$ is large and $L_B$ is small. 	
	{\color{black}Similarly to the unpreconditioned case \cite{tabeart17a}}, we see that for many cases $\kappa(\Sh)$ is a good proxy for convergence: for $\bfH_2$, $\bfH_3$ and $\bfH_4$ reductions in $\kappa(\Sh)$ and the number of iterations required for convergence occur for the same changes to $L_R$ and $L_B$. 
The main difference in behaviour is seen for $\bfH_1$, where increasing $L_R$ increases $\kappa(\Sh)$ for all choices of $L_B$, but makes no difference to the number of iterations required for convergence for $L_B\ge0.4$.

	\begin{figure}
		\centering
		
		\includegraphics[width=0.99\textwidth, trim=36mm 0mm 43mm 0mm, clip]{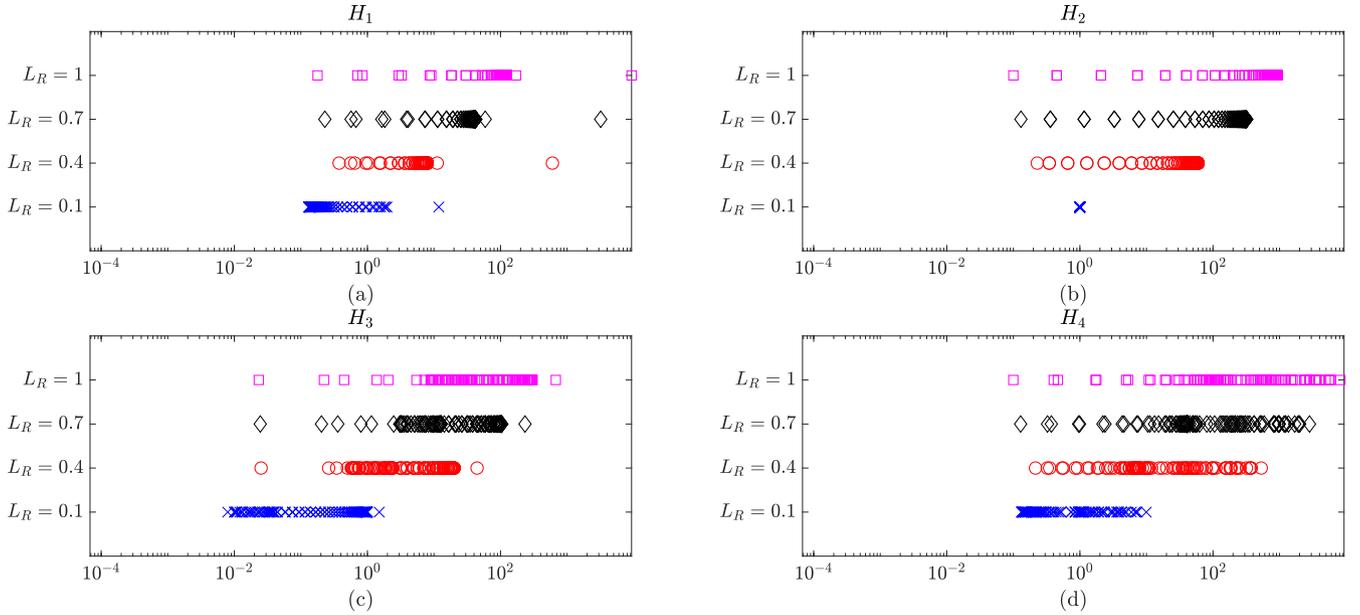} 	
		\caption{Non-zero eigenvalues of $\bfB^{1/2}\bfH^T\bfR^{-1}\bfH\bfB^{1/2}$ for  $L_B=0.1$ and $L_R=0.1$ (crosses), $L_R=0.4$ (circles), $L_R=0.7$ (diamonds) and $L_R=1$ (squares) for (a) $\bfH_1$, (b) $\bfH_2$, (c) $\bfH_3$ and (d) $\bfH_4$. Note the x-axis is plotted with a logarithmic scale. } 
		\label{fig:6:Sevals} 
	\end{figure}
	
	\begin{figure}
		\centering
		
		\includegraphics[width=0.99\textwidth, trim=36mm 0mm 43mm 0mm, clip]{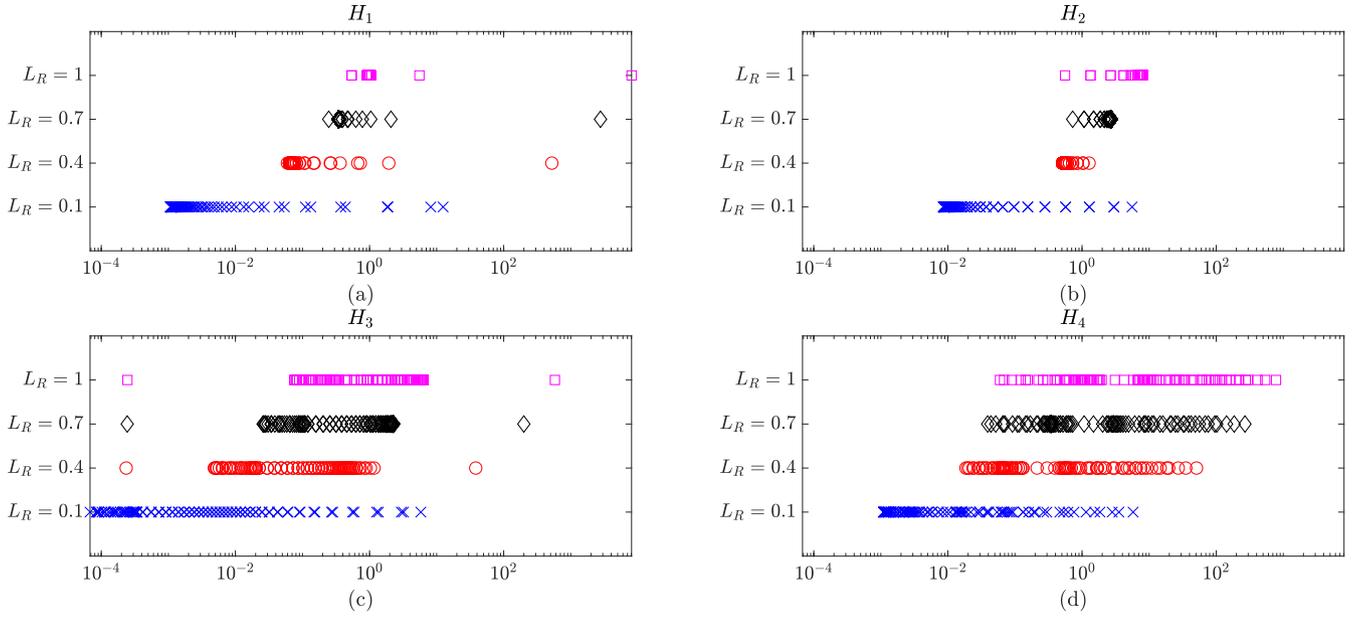} 	
		\caption{Non-zero eigenvalues of $\bfB^{1/2}\bfH^T\bfR^{-1}\bfH\bfB^{1/2}$ for  $L_B=0.5$ and $L_R=0.1$ (crosses), $L_R=0.4$ (circles), $L_R=0.7$ (diamonds) and $L_R=1$ (squares) for (a) $\bfH_1$, (b) $\bfH_2$, (c) $\bfH_3$ and (d) $\bfH_4$. Note the x-axis is plotted with a logarithmic scale. } 
		\label{fig:6:Sevals0.5} 
	\end{figure}

	This difference can be explained by considering the full distribution of the eigenvalues of $\Sh$ rather than just the condition number.	Convergence of the conjugate gradient method depends on the distribution of the entire spectrum, and we expect faster convergence to occur where eigenvalues are clustered (see Theorems 38.3, 38.5 of Trefethen and Bau \cite{TrefethenLloydN.LloydNicholas1997Nla}, and Theorem 38.4 of Gill, Murray and Wright \cite{Gill86}). 
	{\color{black} 	The eigenvalues of the full Hessian are given by $1+\lambda(\bfB^{1/2}\bfH^T\bfR^{-1}\bfH\bfB^{1/2})$, and $N-p$ further unit eigenvalues. Figure \ref{fig:6:Sevals} shows the non-zero eigenvalues of the low-rank update to the identity, $\bfB^{1/2}\bfH^T\bfR^{-1}\bfH\bfB^{1/2}$, for  $L_B=0.1$ and $L_R=0.1,0.4,0.7, 1$.  For all choices of $\bfH$ increasing $L_R$ leads to an increase in the maximum eigenvalue of the product. Additionally, the spectrum is distributed smoothly with few clusters, meaning that the condition number is a good indicator for convergence of a conjugate gradient method. This explains why  increasing $L_R$ for $L_B=0.1$  leads to an increase in the number of iterations required for convergence for all choices of $\bfH$.
		
		Figure \ref{fig:6:Sevals0.5}  shows the non-zero eigenvalues of the low-rank update to the identity, $\bfB^{1/2}\bfH^T\bfR^{-1}\bfH\bfB^{1/2}$, for  $L_B=0.5$ and $L_R=0.1,0.4,0.7, 1$. Although the maximum eigenvalue of the product gets larger with increasing $L_R$ for all choices of $\bfH$, we see increased clustering of the remaining eigenvalues about $1$ for $\bfH_1,\bfH_2,$ and $ \bfH_3$. Therefore, for these parameter choices the condition number of the Hessian does not represent convergence of a conjugate gradient method well.   
		For $\bfH_4$ no such clustering is observed, which explains why increasing $L_R$ for all values of $L_B$ leads to slower convergence of the conjugate gradient method for this observation operator. The clustering occurs due to the very regular structures of $\bfH_1,\bfH_2$ and $\bfH_3$.  Therefore if the observation operator is less regular, as may be expected in realistic observing networks \cite{Guillet19}, the condition number is more likely to be a good proxy for convergence of a conjugate gradient method in this setting.
	} 
	
	We conclude that in this framework changing $L_B$ has a larger effect on convergence of a conjugate gradient method than changing $L_R$. {\color{black}This contrasts with the unpreconditioned case, where changes to both $L_B$ and $L_R$ had a large impact on convergence \cite{tabeart17a}}.  For all choices of observation operator, small values of $L_B$ lead to poor convergence. Although changing $L_R$ impacts $\kappa(\Sh)$, due to an increase in clustered eigenvalues these changes do not always affect the convergence of a conjugate gradient method.
	Overall   the condition number is a good proxy for convergence in this framework.

	\section{Conclusions}\label{sec:6:Conclusions}

	The inclusion of correlated observation errors in data assimilation is important for high resolution forecasts \cite{fowler17,rainwater15}, and to ensure we make the best use of existing data \cite{Michel18,stewart13,simonin18}. However, multiple studies have found issues with convergence of data assimilation routines when introducing correlated observation error covariance (OEC) matrices \cite{weston11,campbell16,bormann16}. Earlier work considers the preconditioned data assimilation problem in the case of uncorrelated OEC matrices \cite{haben11c}. In this paper we study the effect of introducing correlated OEC matrices on the conditioning and convergence of the preconditioned variational data assimilation problem. This extends the theoretical and numerical results of a previous study by Tabeart et al. \cite{tabeart17a} that considered the use of correlated OEC matrices in the unpreconditioned variational data assimilation framework. 
	
	In this paper, we developed bounds on the condition number of the Hessian of the preconditioned variational data assimilation problem, {\color{black} for the case that there are fewer observations than state variables}. We then studied these bounds numerically in an idealised framework. We found that:
	
	\begin{itemize}
		\item As in the unpreconditioned case, decreasing the observation error variance or increasing the background error variance increases the condition number of the Hessian. 
		\item The minimum eigenvalue of the OEC matrix appears in both the upper and lower bounds. This was also true for the unpreconditioned case. 
	\item For a fixed lengthscale of the observation (background) error covariance matrix, $L$,  the condition number of the Hessian is smallest when the lengthscale of the background (observation) error covariance matrix is also equal to $L$.  
		This is in contrast to the unpreconditioned case, where  for a fixed lengthscale of the observation (background) error covariance matrix, the condition number of the Hessian is smallest when the lengthscale of the background (observation) error covariance is minimised.
		\item 	 Our new lower bound represented the qualitative behaviour better than an existing bound for some cases. 
		The upper bound from Haben \cite{haben11c} was shown to be tight for all parameter choices. We proved that under additional assumptions the upper and lower bounds from Haben \cite{haben11c} are equal. 
		\item For most cases the conditioning of the Hessian performed well as a proxy for the convergence of a conjugate gradient method. However in some cases, {\color{black} clustered} eigenvalues (induced by the specific structure of the numerical framework) meant that convergence was much faster than predicted by the conditioning. 
	\end{itemize}
	We remark that our findings about {\color{black} clustered} eigenvalues occur as our numerical framework has very specific structures. In particular, the eigenvectors of the background and observation error covariance matrices are strongly related. Other experiments not presented in this paper considered the use of the Laplacian correlation function for either or both of the observation and background error covariance matrices \cite{haben11c}. Qualitative conclusions were very similar to those shown in Section \ref{sec:6:Numerics}, even though the negative entries of the Laplacian correlation function do not satisfy the additional assumptions required for the bounds to be equal.
	In applications, we are likely to have more complicated observation operators, and the background and observation error covariance matrices are less likely to have complementary structures. Satellite observations for NWP often have interchannel correlation structures that are different from the typical spatial correlations of background error covariance matrices \cite{weston14,stewart13}. We also note that our state variables were evenly distributed and homogeneous, which will not be the case for  non-uniform grids. 
	
	In the unpreconditioned case using a similar numerical framework Tabeart et al. \cite{tabeart17a} found that improving the conditioning of the background or observation error covariance matrix separately would always decrease $\kappa(\Sh)$. The preconditioned system is more complicated; in some cases decreasing the condition number $\kappa(\bfB)$ or $\kappa(\bfRh)$ increases the condition number $\kappa(\Sh)$.
	We expect the relationship between each of the constituent matrices to be complicated for more general problems. This is relevant for practical applications, as estimated observation error covariance matrices 
	typically need to be treated via reconditioning methods before they can be used \cite{weston11,bormann16}. Currently the use of reconditioning methods is heuristic \cite{tabeart17c}, meaning that there may be flexibility to select a treated matrix that will result in faster convergence in some cases. However, popular reconditioning techniques work by increasing small eigenvalues of the observation error covariance matrix. In the preconditioned setting, such techniques will not automatically reduce the value of $\kappa(\Sh)$, due to the multiplication of background and observation error covariances. This means that reconditioning techniques may perform differently for the preconditioned data assimilation problem than in the unpreconditioned setting.

	{\color{black} Although the numerical experiments in this paper consider a limited choice of matrices and parameters, we note that the theory and bounds presented in this work are general and apply to any choice of covariance matrices $\bfB$ and $\bfR$, and any linear observation operator (or generalised observation operator in the case of 4D-Var). We could consider the numerical results presented here as a `best case' due to the circulant structure of both covariance matrices. For more general choices of $\bfB$ and $\bfR$ any eigenvalue clustering is likely to be less extreme, and hence conditioning may be more influential for the convergence of a conjugate gradient method. Increased eigenvalue clustering occurred for observation operators with regular structure, whereas in practice the `randomly observed' experiment is more realistic. For the 4D-Var problem the generalised observation operator $\widehat{\bfH}$ also accounts for model evolution, and hence the structure of the linearised model is also expected to be important when considering clustering and convergence of a conjugate gradient problem. Previous work has also shown that for the unpreconditioned problem, the qualitative behaviour of an operational system \cite{tabeart17b} largely followed the linear theory \cite{tabeart17a}. Similarly, for the case of uncorrelated observation error covariance matrices, the behaviour of  preconditioned 4D-Var experiments broadly coincided with theory from the linear setting \cite{haben11b,haben11c}. This indicates that conclusions arising from the study of linear data assimilation problems can often provide insight for a wider range of practical implementations, even if theoretical results are not directly applicable. }
	\section*{Acknowledgements}
	
	This work is funded in part by the EPSRC Centre for Doctoral Training in Mathematics of
	Planet Earth, the NERC Flooding from Intense Rainfall programme (NE/K008900/1), the EPSRC
	DARE project (EP/P002331/1), the NERC National Centre for Earth Observation and EPSRC grant (EP/S027785/1).

	\bibliographystyle{acm}
	\bibliography{wileyNJD-VANCOUVER}

\end{document}